\newtheorem{theorem}{Theorem}[section]
\newtheorem{lemma}[theorem]{Lemma}
\newtheorem{proposition}[theorem]{Proposition}
\newtheorem{corollary}[theorem]{Corollary}
\theoremstyle{definition}
\newtheorem{definition}[theorem]{Definition}
\theoremstyle{remark}
\newtheorem{remark}[theorem]{Remark}
\numberwithin{equation}{section}
\newcommand{\la}{\lambda}
\newcommand{\al}{\alpha}
\begin{document}

\title{Macdonald symmetric functions of rectangular shapes}
\author{Tommy Wuxing Cai}
\address{School of Sciences,
South China University of Technology, Guangzhou 510640, China}
\email{caiwx@scut.edu.cn}

\keywords{Macdonald functions, vertex operators, q-Dyson constant term}
\subjclass[2010]{Primary: 05E05; Secondary: 17B69, 05E10}

\begin{abstract}Using vertex operator we study Macdonald symmetric functions of rectangular shapes and their
 connection with the q-Dyson Laurent polynomial. We find a vertex operator realization of Macdonald functions
 and thus give a generalized Frobenius formula for them. As byproducts of the realization, we find a q-Dyson constant
  term orthogonality relation which generalizes a conjecture due to Kadell in 2000, and we generalize Matsumoto's hyperdeterminant
  formula for rectangular Jack functions to Macdonald functions.
\end{abstract}
\maketitle

\section{Introduction}
Macdonald symmetric functions \cite{M} $Q_\la(q,t)$ form a remarkable class of symmetric functions.
 They are indexed by partitions $\la$ and depending on parameters $q,t$.  They includes many types of symmetric functions
 as special cases such as Jack functions $Q_\la(\al)$ and Hall--Littlewood functions $Q_\la(0,t)$ . When the Young diagram
 of $\la$ is of rectangular shape, i.e., $\la=(k,k,\cdots,k)$, we say that the corresponding symmetric function is of rectangular
  shape. For example, $Q_{(2,2,2)}(0,t)$ is a rectangular Hall-Littlewood function. We study the rectangular Macdonald functions using vertex operator.

First, we find a solution to the long-standing problem of vertex operator realization of Macdonald symmetric
 functions. Jing's vertex operator \cite{J2} realizes Hall-Littlewood functions gracefully. However, a generalization of
 this realization to Jack functions or Macdonald functions is far from immediate. Jing and the author gave a vertex operator
  realization of Jack functions in \cite{CJ1,CJ3}. In this work, we generalize this realization of Jack functions to Macdonald functions,
   as the limit $q\rightarrow 1$  of the later goes back to the case of Jack functions.
  It comes out that our realization here is different from that in \cite{J2}. Roughly speaking, we use two steps to construct Macdonald functions.
   In the first step, we realize Macdonald functions of rectangular shapes, and in the second step, we use these rectangular Macdonald functions
    to construct those of general shapes (Theorem \ref{T:MacFiltration}). We explain briefly the first step here.

   Let $F=\mathbb{Q}(q,t)$ be the field of rational functions in two independent variables $q,t$. The space of symmetric space is
   a free commutative associative algebra $\Lambda$ generated by variables $p_1,p_2,\cdots$ over $F$ (as we are considering Macdonald functions).
    Let $\mathbb{Q}[\frac{1}{2}\mathbb{Z}]$ be the group algebra of $\{e^{n\eta}:n\in\frac{1}{2}\mathbb{Z}\}$ (with multiplication being $e^{m\eta}e^{n\eta}=e^{(m+n)\eta}$).
     Extend $\Lambda$ to $V$ which is defined to be $V=\Lambda\otimes\mathbb{Q}[\frac{1}{2}\mathbb{Z}]$. We consider the case that $t=q^\beta$ where $\beta$ is
     an arbitrary positive integer. Define the following operators $X_{i}$'s on $V$ using the following generating function:
  \begin{align}
X(z)&=\exp\Big(\sum_{n\geq1}\frac{p_nz^n}{n}\frac{1-q^{n\beta}}{1-q^n}\Big)
M(z)\exp\Big(\sum_{n\geq1}-\frac{\partial}{\partial p_n}z^{-n}\frac{q^{-n\beta}-q^{n\beta}}{1-q^{n\beta}}\Big)\\\nonumber
&=\sum_nX_{-n}z^n,
\end{align}
where $M(z).v\otimes e^{m\eta}=z^{(2m+1)\beta} v\otimes e^{(m+1)\eta}$. For rectangular partition $\la=(k^s)$ and positive integer $\beta$,
 we prove that (as a special case of Theorem \ref{T:main}):
\begin{align}
(X_{-k})^s.1\otimes e^{s\eta/2}=\frac{(q;q)_{s\beta}}{(q;q)_\beta^s}Q_{(k^s)}(q,q^\beta)\otimes e^{s\eta/2},
\end{align}
where the q-Pochhammer symbol $(z;q)_n=(1-z)(1-q^z)\dotsm(1-q^{n-1}z)$. This way, we realize Macdonald functions of rectangular shapes using vertex operator.

Second, we give a q-Dyson orthogonality relation which generalizes a conjecture of Kadell.
 To study the vertex operator realization of Macdonald functions, we consider the following Laurent polynomial:
\begin{align}\label{F:gqdlp}
F_{\boldsymbol{\beta},q}[s;t]&=F_{\boldsymbol{\beta},q}(z_1,\dotsc,z_s;w_1,\dotsc,w_t)\\\nonumber
&=\prod_{1\leq i< j\leq s}\Big(\frac{z_i}{z_j};q\Big)_{\beta_i}\Big(\frac{qz_j}{z_i};q\Big)_{\beta_j}
\prod_{i=1}^s\prod_{j=1}^t\Big(\frac{z_i}{w_j};q\Big)_{\beta_i}^{-1},
\end{align}
where $\boldsymbol{\beta}=(\beta_1,\dotsc,\beta_s)$ is a sequence of positive integers.
If for all $i,j$, $|q^bz_i/w_j|<1$
($b=0,1,\dotsc,\beta_i-1$), then we can expand $F_{\boldsymbol{\beta},q}[s;t]$
into an infinite sum of monomials of the form $z_1^{k_1}\dotsm z_s^{k_s}w_1^{-m_1}\dotsm
w_t^{-m_t}$ with $k_i\in \mathbb{Z}$,
$m_j\in \mathbb{Z}_{\geq0}$ and $k_1+\dotsb+k_s=m_1+\dotsb+m_t$. However, not all monomials of this form appear in the expansion.
The orthogonality relation (Theorem \ref{T:Fbetaqst}) asserts the vanishing of certain monomials, with the coefficients of some monomials also given.
  The $t=1$ case of this relation (see Theorem \ref{C:Kadell}) was conjectured by Kadell \cite{K}.
   Andrews's q-Dyson constant term conjecture \cite{A}, which was first proved by Zeilberger and Bressoud \cite{Z}, is about $F_{\boldsymbol{\beta},q}[s;0]$.
    We also give a proof of this conjecture, based on the observation that $F_{\boldsymbol{\beta},q}[s;0]$ and $F_{\boldsymbol{\beta},q}[s;1]$ have the same constant term.
    The splitting formula for $F_{\boldsymbol{\beta},q}[s;1]$ (see Proposition \ref{P:Fbetaqstw1}) is critical in these proofs.
    We like to mention that this splitting formula may be helpful in the computation of the non-constant term of $F_{\boldsymbol{\beta},q}[s;0]$.
    After this study was finished, we found that the first proof of Kadell's conjecture was given in 2012 \cite{KLW}. Our proofs here are elementary.

Third, we find a formula which connects q-Dyson Laurent polynomial with Macdonald functions of rectangular shapes. We have the following raising-operator-type
formula for rectangular Macdonald functions:
\begin{align}\label{F:reclowing}
\frac{(q;q)_{s\beta}}{(q;q)_\beta^s}Q_{(k^s)}(q,q^\beta)
=\prod_{1\leq<i<j\leq s}\big(\frac{D_i}{D_j};q\Big)_\beta\big(\frac{qD_j}{D_i};q\Big)_\beta.(Q_k(q,q^\beta))^s,
\end{align}
where $Q_n$'s are the one-row Macdonald functions and the operator $D_i$ lowers the $i$th subscript
 by $1$: $D_i.Q_{\la_1}\cdots Q_{\la_i}\dotsm Q_{\la_s}=Q_{\la_1}\dotsm Q_{\la_i-1}\dotsm Q_{\la_s}$ (see Definition \ref{D:Di} for rigorous definition).
  Formula (\ref{F:reclowing}) is a special case-the $t=0$ case-of the formula (\ref{F:qDysonMac}) in Theorem \ref{T:main}.
The $q=1$ case of (\ref{F:reclowing}) is equivalent to Matsumoto's Hyperdeterminant formula for rectangular Jack functions
\cite{Ma}. Matsumoto's formula was generalized to almost rectangular shapes ($Q_\la$ with $\la=((k+1)^t,k^s)$) in \cite{BBL},
and this generalized formula is equivalent to the $q=1$ case of our formula (\ref{F:qDysonMac}).
Formula (\ref{F:reclowing}) can also be specified to the case of Hall--Littlewood functions.
To do this, we set $t=q^\beta$, then use the formula $(z;q)_n=(z;q)_\infty/(zq^n;q)_\infty$ (with $(z;q)_\infty=(1-z)(1-qz)(1-q^2z)\dotsm$). Formula (\ref{F:reclowing}) turns into:
\begin{align}
\label{F:reclowingtcase}
&\frac{(q;q)_\infty}{(qt^s;q)_\infty} \Big(\frac{(qt;q)_\infty}{(q;q)_\infty}\Big)^s Q_{(k^s)}(q,t)\\\nonumber
&=\prod_{1\leq<i<j\leq s}\frac{(D_i/D_j;q)_\infty}{(tD_i/D_j;q)_\infty}\frac{(qD_j/D_i;q)_\infty}{(qtD_j/D_i;q)_\infty}.(Q_k(q,t))^s.
\end{align}
Now the $q=0$ case of this formula is the rectangular case of the well-known raising operator formula for Hall--Littlewood functions:
\begin{align}
\label{F:raisingHL}
Q_{(k^s)}(q,t)=\prod_{1\leq<i<j\leq s}\frac{(1-D_i/D_j)}{(1-tD_i/D_j)}.(Q_k(q,t))^s.
\end{align}
We can not generalize formula (\ref{F:reclowingtcase}) to general shapes.
However, we can express a Macdonald function as the coefficient of some monomial of a Laurent polynomial (see Remark \ref{R:MacAscoefficient} at the end of the last section).
 We should also mention that the raising operator formula for Macdonald functions was given by  Lassalle and Scholosser \cite{LS},
  as a generalization of the formula for the two-row Macdonald functions \cite{JJ}.

This paper is organized as following. The basic notions about Macdonald functions are given in Section 2, and in Section 3 we
 define the vertex operator which we use to realize Macdonald functions. The q-Dyson orthogonality relations are studied in Section 4.
We devote Section 5 to the vertex operator realization of Macdonald functions of almost rectangular shapes, and the special case of Jack functions are also considered.
 Finally in Section 6, we use the rectangular Macdonald functions to construct Macdonald functions of general shapes and give a generalized Frobenious formula for Macdonald functions.

\section{Partitions and Macdonald functions}
We give some notations and definitions of partitions and Macdonald symmetric
functions in this section, most of which are from \cite{M}.

 A partition is
a sequence $\la=(\la_1,\la_2,\dotsc,\la_s)$ of non-negative integers
in weakly decreasing order. The set of all partitions is denoted by $\mathcal{P}$.
The length $l(\la)$ of $\la$ is
defined to be the number of nonzero parts of $\la$; i.e., $l(\la)=\operatorname{max}\{i:\la_i\neq0\}$. We adopt the convention
 that $\la_i=0$ for $i>l(\la)$. We write $\la\vdash n$ if the weight
of $\la$, which is defined to be $|\la|=\sum_{i\geq1}\la_i$, equals $n$.
For $\la,\mu$ of the same weight, we write $\la\geq\mu$ if
$\sum_{j\leq i}(\la_j-\mu_j)\geq0$ for all $i$. This
defines the dominance ordering. As usual, we write $\la>\mu$ if $\la\geq\mu$ but $\la\neq\mu$.

For a partition $\la$, we let $x^\la$ denote the monomial $x_1^{\la_1}x_2^{\la_2}\dotsm$.
For a Laurent polynomial $F=f(x_1,x_2,\cdot,x_s)$, we use notation $\operatorname{C.T.}F$ for the constant term of $F$.

Sometimes we write $\la=(1^{m_1}2^{m_2}\dotsm)$, where $m_i=m_i(\la)$ is the multiplicity of
 $i$ in $\la$. For two partitions $\la,\mu$, one defines
the set union $\la\cup\mu$ by $m_i(\la\cup\mu)=m_i(\la)+m_i(\mu)$ for
all $i$.

In this paper, we also write a partition in the form of
$\la=(a_1^{n_1}a_2^{n_2}\dotsm a_r^{n_r})$, where it means
$m_{a_j}(\la)=n_j>0$ and $a_1>a_2>\dotsc>a_r>0$. For example, one
has $(4^3)=(4,4,4)$ and $(4^23^1)=(4,4,3)$.

To visualize partitions, one identifies a partition $\la$ with its
Young diagram $Y(\la)$, which is a collection of left justified rows
with $\la_1$ boxes on the first (top) row and $\la_2$ boxes on the
second row and so on. We call a partition with all its parts
identical a rectangular partition, a partition of rectangular
shape or a rectangle, as its Young diagram is of that shape. So $\la=(k^s)$
($k,s>0$) is of rectangular shape. Similarly, one calls
$\la=((k+1)^t,k^s)$ ($k,s\geq1$, $t\geq0$) an almost rectangular partition \cite{BBL}.

Let $\Lambda_F$ be the free commutative associative algebra generated by $p_1,p_2,\dotsc$ over $F$, where $F=\mathbb{Q}(q,t)$ is the
 field of rational functions in two independent indeterminate
$q,t$. We call the elements in $\Lambda_F$ symmetric functions. The power sum symmetric functions $p_\la=p_{\la_1}p_{\la_2}\dotsm$
($\la\in\mathcal {P}$, $p_0=1$) form a basis of $\Lambda_F$. Let $\Lambda_F^n$ be the subspace spanned by the set $\{p_\la:|\la|=n\}$.
We see that $\Lambda_F$ is a graded algebra; it is the direct sum of $\Lambda_F^0=F$, $\Lambda_F^1=Fp_1$, $\Lambda_F^2$ and so on.
 We define a scalar product on $\Lambda_F$ by
\begin{equation}
\langle
p_\lambda,p_\mu\rangle=\delta_{\lambda\mu}z_\la\prod_{i\geq1}\frac{1-q^{\la_i}}{1-t^{\la_i}}
 \hspace{0.5cm}(\lambda,\mu\in\mathcal {P}),
\end{equation}
where $\delta_{\lambda\mu}$ is the Kronecker symbol, and
$z_\la=\prod_{i\geq1}i^{m_i(\la)}m_i(\la)!$. 
The generalized complete symmetric function $Q_n(q,t)$ is defined by:
\begin{equation}
\exp\Big(\sum_{n\geq1}\frac{p_nz^n}{n}\frac{1-t^n}{1-q^n}\Big)=\sum_n Q_n(q,t)z^n.
\end{equation}
The generalized complete symmetric functions
$g_\la(q,t)=Q_{\la_1}(q,t)Q_{\la_2}(q,t)\dotsm$
($\la\in\mathcal{P}$) form a basis of $\Lambda_F$.
Macdonald symmetric functions $Q_\la(q,t)$ form an orthogonal basis of $\Lambda_F$. It is given by the following:
\begin{lemma}\cite{M}\label{D:Qlambda}
There is a unique family of symmetric functions $Q_{\la}(q,t)\in \Lambda_F$ ($\la\in\mathcal{P}$) satisfying the following conditions:\\
\textnormal{(1)} $Q_\la(q,t)=g_{\la}(q,t)+\sum_{\mu>\la}d_{\la\mu}g_\mu(q,t)$;\\
\textnormal{(2)} $\langle Q_\la(q,t),g_\mu(q,t)\rangle=0$ if $\mu>\la$.
\end{lemma}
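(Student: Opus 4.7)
The plan is to reduce the lemma to Macdonald's dual characterization of the same polynomials through the monomial symmetric functions $m_\mu$. From \cite{M} there is a unique family $P_\la$ with $P_\la = m_\la + \sum_{\mu < \la} u_{\la\mu}\, m_\mu$ satisfying $\langle P_\la, P_\mu\rangle = 0$ for $\la \neq \mu$, constructed via the self-adjoint Macdonald operator, with $\langle P_\la, P_\la\rangle \in F \setminus \{0\}$ given by an explicit product formula; set $Q_\la := P_\la / \langle P_\la, P_\la\rangle$. The central tool in verifying (1) and (2) is the Cauchy duality $\langle g_\mu, m_\nu\rangle = \delta_{\mu\nu}$, which follows from the generating function defining $Q_n$ together with the orthogonality of the $p_\la$.

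For existence, condition (2) is immediate from the $m$-expansion of $P_\la$ and the duality:
\[
\langle Q_\la, g_\mu\rangle = \frac{1}{\langle P_\la, P_\la\rangle}\Big(\delta_{\la\mu} + \sum_{\nu < \la} u_{\la\nu}\, \delta_{\nu\mu}\Big),
\]
which vanishes for $\mu > \la$ since $\la \neq \mu$ and each $\nu < \la$ satisfies $\nu \neq \mu$. For (1), write $Q_\la = \sum_\mu e_{\la\mu}\, g_\mu$; by duality $e_{\la\mu} = \langle Q_\la, m_\mu\rangle$. Inverting the triangular $P$-to-$m$ expansion to $m_\mu = \sum_{\rho \leq \mu} w_{\mu\rho}\, P_\rho$ (with $w_{\mu\mu} = 1$) and using orthogonality of the $P_\rho$'s gives $e_{\la\mu} = w_{\mu\la}$, which is nonzero only when $\mu \geq \la$ and equals $1$ at $\mu = \la$.

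For uniqueness, let $R_\la$ be the difference of two families satisfying (1), (2). Then $R_\la \in \operatorname{span}\{g_\mu : \mu > \la\}$ and $\langle R_\la, g_\mu\rangle = 0$ for all $\mu > \la$. Writing $R_\la = \sum_\rho s_\rho P_\rho$, the identity $\langle P_\rho, g_\mu\rangle = \delta_{\rho\mu} + u_{\rho\mu}\cdot[\mu < \rho]$ yields $s_\mu + \sum_{\rho > \mu} s_\rho u_{\rho\mu} = 0$ for each $\mu > \la$; processing the $\mu$'s from the top of the dominance order downward forces $s_\rho = 0$ for every $\rho > \la$. For the remaining terms with $\rho \not > \la$, duality and $R_\la \in \operatorname{span}\{g_\mu : \mu > \la\}$ give $\langle R_\la, m_\sigma\rangle = 0$ for all $\sigma \not > \la$; so for the maximal surviving $\rho$ in the $P$-expansion of $R_\la$, expanding $P_\rho$ in monomials $m_\sigma$ with $\sigma \leq \rho$ (all satisfying $\sigma \not > \la$ by transitivity) gives $\langle R_\la, P_\rho\rangle = 0$, contradicting $\langle R_\la, P_\rho\rangle = s_\rho \langle P_\rho, P_\rho\rangle \neq 0$. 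Thus $R_\la = 0$.

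The main obstacle is that dominance is only a partial order, so a naive Gram--Schmidt in the $g$-basis does not work directly; the uniqueness argument must be split into two triangular sweeps (one over $\rho > \la$ via $g$-pairings, one over $\rho \not > \la$ via $m$-pairings), glued together by the $g$--$m$ Cauchy duality.
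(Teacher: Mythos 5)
Your proof is correct. The paper offers no proof of its own here—the lemma is quoted from \cite{M}—and your argument is essentially Macdonald's original one: transport the $(P_\la,m_\mu)$ characterization to the $(Q_\la,g_\mu)$ one via the Cauchy duality $\langle g_\mu,m_\nu\rangle=\delta_{\mu\nu}$, with the nonvanishing of $\langle P_\la,P_\la\rangle$ supplying both the normalization and the nondegeneracy that makes your two triangular sweeps (the second of which is in fact already implied by the first, since $g_\mu\in\operatorname{span}\{P_\sigma:\sigma\geq\mu\}$) close up the uniqueness argument.
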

We see that $Q_\la(q,t)$ is in $\Lambda_F^{|\la|}$ and $Q_{(n)}(q,t)$ equals $Q_n(q,t)$.
We remark that our treatment of symmetric functions here is to look at them as elements in a commutative algebra generated by $p_1,p_2,\dotsc$. Originally $p_n$
is the formal power sum $x_1^n+x_2^n+\dotsm$, and the symmetry is in these variables $x_1,x_2,\dotsc$.
 However, we are interested in this commutative algebra treatment of $\Lambda_F$ in this study.
   In fact, Lemma \ref{D:Qlambda} can also be proved-in \cite{CJ2} for example-inside this framework (without going back to those varialbles $x_1,x_2,\dotsc$).
    We should also mention that there is another basis $\{P_\mu(q,t)\}$ such that $\langle Q_\la(q,t),P_\mu(q,t)\rangle=\delta_{\la\mu}$.
Both $Q_\la$'s and $P_\la$'s are called Macdonald symmetric functions but we are only interested in $Q_\la(q,t)$'s. Moreover, we only study
the case that $t=q^\beta$, where $\beta$ is an arbitrary positive integer.

We will need the following formula for the norm of the Macdonald functions.
\begin{lemma}\cite{M}\label{L:Macnorm}
For a partition $\la$, one has
\begin{equation}\label{F:norm}
\langle Q_ \la(q,q^\beta),Q_\la(q,q^\beta)\rangle=\prod \frac{1-q^{\la_i-j+\beta(\la_j'-i+1)}}{1-q^{\la_i-j+1+\beta(\la_j'-i)}},
\end{equation}
where the product go through $(i,j)$ such that $1\leq i\leq l(\la), 1\leq j\leq \la_i$, and $\la'_j$ is the number of those $i$'s such
that $\la_i\geq j$.
\end{lemma}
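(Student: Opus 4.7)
The plan is to reduce the stated identity to the standard Macdonald norm formula via the substitution $t=q^\beta$. For a cell $s=(i,j)\in Y(\la)$, set $a(s)=\la_i-j$ (arm) and $l(s)=\la_j'-i$ (leg). Then the exponents in (\ref{F:norm}) satisfy
\[
\la_i-j+\beta(\la_j'-i+1)=a(s)+\beta(l(s)+1),\qquad \la_i-j+1+\beta(\la_j'-i)=a(s)+1+\beta l(s),
\]
so the right-hand side of (\ref{F:norm}) is exactly $b_\la(q,t):=\prod_{s\in Y(\la)}\frac{1-q^{a(s)}t^{l(s)+1}}{1-q^{a(s)+1}t^{l(s)}}$ specialized at $t=q^\beta$. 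The task therefore reduces to proving the two-parameter identity $\langle Q_\la(q,t),Q_\la(q,t)\rangle=b_\la(q,t)$.

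My approach is to pass through the dual basis $\{P_\la\}$ characterized by $\langle P_\la,Q_\mu\rangle=\delta_{\la\mu}$. From the triangularity in Lemma \ref{D:Qlambda} such a $P_\la$ exists and satisfies $Q_\la=b_\la P_\la$ for a scalar $b_\la\in F$; the duality then gives immediately
\[
\langle Q_\la,Q_\la\rangle=b_\la\langle P_\la,Q_\la\rangle=b_\la.
\]
So everything comes down to computing the proportionality constant $b_\la$. To do this I would first establish orthogonality of $\{Q_\la\}$ independently by using the Macdonald operator $D$ and its self-adjointness: the $Q_\la$'s are joint eigenfunctions of $D$ with eigenvalues that are pairwise distinct as rational functions of $q,t$, forcing $\langle Q_\la,Q_\mu\rangle=0$ for $\la\neq\mu$. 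Then I would invoke the Pieri rule for multiplication $P_\la\cdot Q_n$ (in the $P$-basis, with the explicit arm/leg coefficients), and match both sides of $\langle P_\la\cdot Q_n, Q_\mu\rangle=\langle P_\la, P_\mu\cdot \text{(adjoint)}\rangle$ to obtain a recursion for $b_\la$ along a chain $\emptyset\to(1)\to(1,1)\to\cdots\to\la$ that adds one box at a time.

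Induction on $|\la|$ with this recursion yields the closed product $b_\la(q,t)=\prod_s(1-q^{a(s)}t^{l(s)+1})/(1-q^{a(s)+1}t^{l(s)})$, after which the lemma follows by setting $t=q^\beta$. The main obstacle is the explicit Pieri rule itself: the arm/leg factors in $b_\la$ only emerge after a somewhat intricate analysis of horizontal strips and the associated skew Macdonald polynomials, together with a Cauchy-type identity used to invert the Pieri coefficients. In practice, since this entire chain is worked out in Chapter VI of \cite{M}, I would simply cite that derivation and record only the elementary verification that the product in (\ref{F:norm}) coincides with $b_\la(q,q^\beta)$ under the arm/leg rewriting above.
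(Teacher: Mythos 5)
The paper gives no proof of this lemma: it is quoted directly from Macdonald's book \cite{M}, and your proposal ultimately does the same thing, correctly identifying the right-hand side as the standard norm $b_\la(q,t)=\prod_{s}\frac{1-q^{a(s)}t^{l(s)+1}}{1-q^{a(s)+1}t^{l(s)}}$ specialized at $t=q^\beta$ (the arm/leg exponent check is right) and deferring the derivation of $b_\la$ to Chapter VI of \cite{M}. So your approach is essentially the paper's, with a correct sketch of the underlying argument added.
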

We will also study Jack functions $Q_\la(\al)$. They form an orthogonal basis with respect to another scalar product. They can be
looked as a special case of Macdonald functions. To put it shortly, $Q_\la(\beta^{-1})$ is the limit of $Q_ \la(q,q^\beta)$ as $q$ goes to $1$.
\section{The vertex operator}

For convenience, we extend the space of symmetric functions $\Lambda_F$ a little. We define the algebra $V=\Lambda_F\otimes
\mathbb{Q}[\frac12\mathbb{Z}]$, where
 $\mathbb{Q}[\frac12\mathbb{Z}]$ is the group algebra of $\{e^{n\eta}|n\in\frac{1}{2}\mathbb{Z}\}$
 with multiplication given by
  $e^{m\eta}e^{n\eta}=e^{(m+n)\eta}$.
   We can extend the scalar product to $V$ by
\begin{equation}
\langle u\otimes e^{m\eta},v\otimes e^{n\eta}\rangle=\langle u,v\rangle\delta_{mn}.
\end{equation}
Now let us define the following vertex operator on the vector space $V$:
\begin{align}\label{D:vertexoperator}
X(z)&=\exp\Big(\sum_{n\geq1}\frac{h_{-n}z^n}{n}\frac{1-q^{n\beta}}{1-q^n}\Big)
M(z)\exp\Big(\sum_{n\geq1}\frac{h_{n}z^{-n}}{-n}\frac{q^{-n\beta}-q^{n\beta}}{1-q^n}\Big)\\\nonumber
&=\sum_nX_{-n}z^n,
\end{align}
where for positive integer $n$ and $v\otimes e^{m\eta}\in V$,
\begin{align*}
&h_{-n}.v\otimes e^{m\eta}=p_{n}v\otimes e^{m\eta},\\
&h_n.v\otimes e^{m\eta}=n\frac{1-q^n}{1-q^{n\beta}}\frac{\partial}{\partial p_{n}}v\otimes e^{m\eta},\\
&M(z).v\otimes e^{m\eta}=z^{(2m+1)\beta} v\otimes e^{(m+1)\eta}.
\end{align*}
\begin{remark}
By definition, the operators $h_n$ and $h_{-n}$ are conjugate; i.e., for $x,y\in V$, one has
$\langle h_n.x,y\rangle=\langle x,h_{-n}.y\rangle$.
\end{remark}
We call $M(z)$ the middle term of the vertex operator $X(z)$.
The term to the left (resp. right) side of $M(z)$ is called the creation (resp. annihilation) part of $X(z)$, as $h_n$ maps $\Lambda_F^m\otimes e^{s\eta}$ into $\Lambda_F^{m-n}\otimes e^{s\eta}$ (for $n\neq0$, and we set $\Lambda_F^{k}=0$ if $k<0$).
Acting on $v\otimes e^{m\eta}$, the effect the middle term is independent of $v$ and
 that of $h_n$ ($n\neq0$) is independent of $e^{m\eta}$.
 We thus say that $M(z)$ acts on the group algebra part of the tensor product,
 while the creation part and annihilation part of $X(z)$ act on the symmetric function part.

We caution the reader that our notation $h_i$'s
has nothing to do with the complete symmetric polynomials, which are
one-row Schur polynomials and we will use the notation $S_n$ for them. Restricting their actions on $V_m=\Lambda_F\otimes e^{m\eta}$ ($m\in\frac 12\mathbb{Z}$),
 these $h_i$'s together with the identity map $I$ form a basis of a Heisenberg algebra (as a Lie sub-algebra of $\mathrm{End}_F(V_m)$), with the Lie bracket satisfying
 \begin{equation}\label{F:lie}
 [h_n,h_{-m}]=\delta_{m,n}m\frac{1-q^n}{1-q^{n\beta}}I\hspace{0.5cm} ( n>0 ).
 \end{equation}

This vertex operator is related to the Laurent polynomial $F_{\beta,q}[s;t]$.
 Recall that it is the special case of $F_{\boldsymbol{\beta},q}[s;t]$ (see (\ref{F:gqdlp})) when all $\beta_i$'s equal $\beta$.  The special case $F_{\beta,q}[s;0]$ is equal to
\begin{align}
 F_{\beta,q}(z_1,\dotsc,z_s)
=\prod_{1\leq i< j\leq s}\Big(\frac{z_i}{z_j};q\Big)_{\beta}\Big(\frac{qz_j}{z_i};q\Big)_{\beta}.
\end{align}
Define the lowering operator $D_i$'s as the following:
\begin{definition}\label{D:Di}

Let $F=\sum_{i_1,\dotsc,i_s}c_{i_1i_2\dotsc i_s}D_1^{i_1}\dotsm
D_s^{i_s}$ be a Laurent polynomial. For a symmetric functions $g_\la=Q_{\la_1}\dotsm
Q_{\la_s}$, the action of $F$ on $g_\la$ is defined by
\begin{equation}
F.g_\la=\sum_{i_1,\dotsc,i_s}c_{i_1i_2\dotsc i_s}Q_{\la_1-i_1}\dotsm Q_{\la_s-i_s}.
\end{equation}
\end{definition}
We see that $D_i$ lowers the $i$th subscript by one, and this is why we call it a lowering operator.
 Note that if $F$ is a Laurent polynomial of those $D_j/D_i$'s with $i<j$, then $F$ is the usual raising operator.
 Thus lowering operator is a generalization of raising operator. Also recall that we let $\operatorname{C.T.}F$ denote the constant term of $F$ and $x^\la=x_1^{\la_1}x_2^{\la_2}\dotsm$.
 Now we can state the connection between $X(z)$ and $F_{\beta,q}[s;t]$.
\begin{lemma}\label{L:Xlambda}
 For two partitions $\la$ and $\mu$ of length $s$ and $t$ respectively, we have
\begin{align}\label{F:Xlambdaraising}
&X_{-\la_1}\dotsm X_{-\la_s}.1\otimes e^{-s\eta/2}\\
\nonumber&=\epsilon_q(\beta,s)F_{\beta,q}(D_1,\dotsc,D_s).
Q_{\la_1}(q,q^\beta)\dotsc Q_{\la_s}(q,q^\beta) \otimes e^{s\eta/2},\\
\label{F:XYC.T.}
 &\langle X_{-\la_1}\dotsm X_{-\la_s}.1\otimes e^{-s\eta/2},
Q_{\mu_1}(q,q^{\beta})\dotsm Q_{\mu_t}(q,q^{\beta}) \otimes
e^{s\eta/2}\rangle\\ \nonumber
&=\epsilon_q(\beta,s)\operatorname{C.T.} (z^{\la}/w^{\mu})^{-1}F_{\beta,q}[s;t],
\end{align}
where
\begin{equation}
\epsilon_q(\beta,s)=(-1)^{\beta
s(s-1)/2}q^{-\beta(\beta+1)s(s-1)/4}.
\end{equation}

\end{lemma}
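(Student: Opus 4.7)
The plan is to derive both identities from a single normal-ordered expression for the generating function $X(z_1)\cdots X(z_s).1\otimes e^{-s\eta/2}$. Writing $X(z) = X^+(z)M(z)X^-(z)$, I would move every annihilation factor $X^-(z_i)$ to the right past the creation factors $X^+(z_j)$ with $i<j$; because $X^-$ commutes with every $M(z_j)$ and with the other $X^-(z_k)$'s (the middle term acts only on the group-algebra factor), these swaps are the only source of new terms. Since $[\log X^-(z_i),\log X^+(z_j)]$ is scalar by the Heisenberg relation \eqref{F:lie}, the Baker--Campbell--Hausdorff identity gives
\[
X^-(z_i)X^+(z_j) = X^+(z_j)X^-(z_i)\cdot\exp\bigl[\log X^-(z_i),\log X^+(z_j)\bigr],
\]
and a direct geometric sum using $[h_n,h_{-n}] = n(1-q^n)/(1-q^{n\beta})$ together with the coefficients of $X^{\pm}$ collapses the exponential to $\prod_{k=-\beta}^{\beta-1}(1-q^k z_j/z_i)$.

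Next I would evaluate the middle piece: acting successively on the vacuum, $M(z_1)\cdots M(z_s).1\otimes e^{-s\eta/2} = \prod_i z_i^{(s-2i+1)\beta}\cdot 1\otimes e^{s\eta/2}$, which one rewrites as $\prod_{i<j}(z_i/z_j)^\beta\cdot 1\otimes e^{s\eta/2}$. Splitting each Wick product through $1-q^{-k}u = -q^{-k}u(1-q^k u^{-1})$ separates it into $q$-Pochhammer factors contributing $(z_j/z_i;q)_\beta(qz_i/z_j;q)_\beta$ and a scalar $(-1)^\beta q^{-\beta(\beta+1)/2}(z_j/z_i)^\beta$; the $(z_j/z_i)^\beta$ pieces cancel against the $M$-generated $(z_i/z_j)^\beta$ from each pair, while the remaining pair-scalars collect to $\epsilon_q(\beta,s)$. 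Since $X^-$ annihilates the vacuum and $X^+(z).1\otimes e^{m\eta} = Q(z)\otimes e^{m\eta}$ with $Q(z)=\sum_k Q_k(q,q^\beta)z^k$ by the defining generating function for $Q_n$, I arrive at
\[
X(z_1)\cdots X(z_s).1\otimes e^{-s\eta/2} = \epsilon_q(\beta,s)\,F_{\beta,q}(z_1,\dotsc,z_s)\prod_{i=1}^s Q(z_i)\otimes e^{s\eta/2}.
\]
Extracting the coefficient of $z_1^{\lambda_1}\cdots z_s^{\lambda_s}$ and translating multiplication by $z_i$ into the lowering operator $D_i$ acting on $Q_{\lambda_i}(q,q^\beta)$ yields \eqref{F:Xlambdaraising}.

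For the inner-product identity \eqref{F:XYC.T.}, I would pair the same generating identity with $\prod_j Q(w_j)\otimes e^{s\eta/2}$. Using the conjugacy $\langle h_n\cdot x,y\rangle = \langle x,h_{-n}\cdot y\rangle$ of the preceding Remark together with the standard Heisenberg formula $\langle\exp(\sum a_n p_n/n),\exp(\sum b_n p_n/n)\rangle = \exp(\sum a_n b_n \langle p_n,p_n\rangle/n^2)$, one checks that $\langle Q(z),Q(w)\rangle = 1/(zw;q)_\beta$ and iteratively $\langle\prod_i Q(z_i),\prod_j Q(w_j)\rangle = \prod_{i,j}(z_iw_j;q)_\beta^{-1}$. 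Pairing the generating form above against $\prod_j Q(w_j)\otimes e^{s\eta/2}$ therefore produces $\epsilon_q(\beta,s)\,F_{\beta,q}(z)\prod_{i,j}(z_iw_j;q)_\beta^{-1}$. Extracting $[z^\lambda w^\mu]$ and applying the formal substitution $w_j\mapsto 1/w_j$ sends $(z_iw_j;q)_\beta^{-1}$ to $(z_i/w_j;q)_\beta^{-1}$ and $[w^\mu]$ to $[w^{-\mu}]$, assembling exactly into $\operatorname{C.T.}(z^\lambda/w^\mu)^{-1}F_{\beta,q}[s;t]$.

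The main technical hurdle is the first step: accumulating the two-sided Wick product $\prod_{k=-\beta}^{\beta-1}(1-q^k z_j/z_i)$ together with the skew scalar weight $\prod_i z_i^{(s-2i+1)\beta}$ supplied by the middle operators $M(z_i)$, and verifying that the result repackages cleanly into the one-sided $q$-Pochhammers of $F_{\beta,q}$ with the residual sign and power of $q$ collecting exactly into $\epsilon_q(\beta,s)$. Once that bookkeeping is complete, the remaining manipulations are routine generating-function and Cauchy-kernel computations.
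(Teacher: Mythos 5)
Your proposal follows essentially the same route as the paper's own proof: normal-order the product of vertex operators using the Heisenberg relation (\ref{F:lie}), evaluate the middle terms successively on the vacuum to produce the monomial $\prod_i z_i^{(s-2i+1)\beta}=\prod_{i<j}(z_i/z_j)^\beta$, recombine the negative-index contraction factors with that monomial so that the residual signs and powers of $q$ collect into $\epsilon_q(\beta,s)$, and obtain (\ref{F:XYC.T.}) by pairing against the adjoint half vertex operators (your $Q(w_j)$ with $w_j\mapsto w_j^{-1}$ is exactly the paper's $Y(w_j)$ and $Y(w_j)^*$). The one bookkeeping point to watch is that with your ordering $X(z_1)\dotsm X(z_s)$ the contractions you correctly compute assemble to $\prod_{i<j}(z_j/z_i;q)_\beta(qz_i/z_j;q)_\beta$, which is $F_{\beta,q}$ in the \emph{reversed} variables $(z_s,\dotsc,z_1)$ rather than $F_{\beta,q}(z_1,\dotsc,z_s)$ as written in your final display; the paper sidesteps this by normal-ordering $X(z_s)\dotsm X(z_1)$ instead, at the cost of its $z^\la$-coefficient being $X_{-\la_s}\dotsm X_{-\la_1}$, so the reversal is already implicit in the statement and is immaterial for the rectangular and almost-rectangular applications, but you should fix the labeling in the last step.
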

\begin{proof}
For convenience, set
\begin{align*}
B_j=\sum_{n\geq1}\frac{h_{n}z_j^{-n}}{-n}\frac{q^{-n\beta}-q^{n\beta}}{1-q^n},
~~~A_i=\sum_{n\geq1}\frac{h_{-n}z_i^n}{n}\frac{1-q^{n\beta}}{1-q^n}.
\end{align*}
Then we can write $X(z_k)=e^{A_k}M(z_k)e^{B_k}$.
Using (\ref{F:lie}), one has
\begin{align*}
[B_j,A_i]&=\sum_{n\geq1}\frac{(z_i/z_j)^n}{-n}(q^{n(-\beta)}+q^{n(-\beta+1)}+q^{n(-\beta+2)}+\dotsb+q^{n(\beta-1)}).\\
\end{align*}
Thus the exponential of it can be written as following:
\begin{align*}
e^{[B_j,A_i]}&=\prod_{k=-\beta}^{\beta-1}\exp\Big(\sum_{n\geq1}\frac{(z_i/z_j)^n}{-n}q^{nk}\Big)
=\prod_{k=-\beta}^{\beta-1}(1-q^kz_i/z_j).
\end{align*}
As $[B_j,A_i]$ commutes with $B_j$ and $A_i$ (as operators on $V$), we know that
$e^{B_j}e^{A_i}$ equals $e^{A_i}e^{B_j}e^{[B_j,A_i]}$. This enables us to move the $e^{B_j}$'s to the right side of $e^{A_i}$'s as in the following:
\begin{align*}
&X(z_s)\dotsm X(z_2)X(z_1)\\
&=e^{A_s}\dotsm e^{A_1}e^{B_s}\dotsm
e^{B_1}M(z_s)\dotsm M(z_1) \prod_{1\leq
i<j\leq s}\prod_{k=-\beta}^{\beta-1}\Big(1-q^k\frac{z_i}{z_j}\Big).
\end{align*}
Notice that $e^{B_k}.1\otimes e^{m\eta}$ equals $1\otimes e^{m\eta}$ and the action of $e^{A_k}$ is the same as the multiplication of $\sum_{n\geq0}Q_n(q,q^\beta)z_k^n$ (to the symmetric function part). Hence we have
\begin{align*}
&X(z_s)\dotsm X(z_2)X(z_1).1\otimes e^{-s\eta/2}\\\nonumber
&=z_1^{(-s+1)\beta}z_2^{(-s+3)\beta} \dotsm z_s^{(s-1)\beta}
\prod_{1\leq i<j\leq
s}\prod_{k=-\beta}^{\beta-1}\Big(1-q^k\frac{z_i}{z_j}\Big)e^{A_s}\dotsm
e^{A_1}.1\otimes e^{s\eta/2}\\\nonumber
&=\epsilon_q(\beta,s)\prod_{1\leq i\leq j\leq
s}\Big(\frac{z_i}{z_j};q\Big)_\beta\Big(\frac{q z_j}{z_i};q\Big)_\beta e^{A_s}\dotsm e^{A_1}.1
\otimes e^{s\eta/2}\\\nonumber
&=\epsilon_q(\beta,s)\prod_{1\leq i\leq
j\leq s}\Big(\frac{z_i}{z_j};q\Big)_\beta\Big(\frac{q z_j}{z_i};q\Big)_\beta
\prod_{k=1}^s\Big(\sum_{n\geq0}Q_n(q,q^\beta)z_k^n\Big)\otimes
e^{s\eta/2}.
\end{align*}
Now we have the formula (\ref{F:Xlambdaraising}) (by Definition (\ref{D:Di})).

To prove the second formula, first we set
\begin{align*}
&Y(w)=\exp\Big(\sum_{n\geq1}\frac{h_{-n}w^{-n}}{n}\frac{1-q^{n\beta}}{1-q^n}\Big)=\sum_nY_{-n}w^{-n},\\
&Y(w)^*=\exp\Big(\sum_{n\geq1}\frac{h_{n}w^{-n}}{n}\frac{1-q^{n\beta}}{1-q^n}\Big)=\sum_nY_{-n}^*w^{-n}.
\end{align*}
Note that $Y(w)$ is the creation part of $X(w)$. The action of $Y_{-n}$ is the same as multiplication of $Q_n(q,q^\beta)$ (to the symmetric functions part)
 and $\langle x,
Y_{-n}.y\rangle=\langle Y_{-n}^*.x,y\rangle$, i.e.,$Y_{-n}^*$ is really the conjugate of $Y_{-n}$.
 The left
side of (\ref{F:XYC.T.}) is equal to the coefficient of
$z^{\la}/w^{\mu}=z_1^{\la_1}z_2^{\la_2}\dotsm/(w_1^{\mu_1}w_2^{\mu_2}\dotsm)$ in the following
\begin{align}
&\langle X(z_s)\dotsm X(z_1).1\otimes e^{-s\eta/2},Y(w_1)\dotsm
Y(w_t).1\otimes e^{s\eta/2}\rangle\\\nonumber
 &=\langle Y(w_t)^*\dotsm
Y(w_1)^*X(z_s)\dotsm X(z_1).1\otimes e^{-s\eta/2},1\otimes
e^{s\eta/2}\rangle\\\nonumber
 &=\epsilon_q(\beta,s)\prod_{1\leq i<
j\leq s}\Big(\frac{z_i}{z_j};q\Big)_\beta\Big(\frac{qz_j}{z_i};q\Big)_\beta
\prod_{i=1}^s\prod_{j=1}^t\Big(\frac{z_i}{w_j};q\Big)_\beta^{-1}.
\end{align}
Now the formula (\ref{F:XYC.T.}) follows.
\end{proof}
Let us use the vertex operator $Y(w)$ and its conjugate $Y(w)^*$ to analysis the reciprocal of the q-Pochhammer symbol $(z;q)_\beta$.
\begin{lemma}\label{C:cozqbeta1}
For non-negative integers $m,\beta$, the coefficient of $z^m$ in $(z;q)_\beta^{-1}$ is:
\begin{align*}
\operatorname{C.T.} z^{-m}(z;q)_\beta^{-1}&=(q^\beta;q)_m/(q;q)_m.
\end{align*}
\end{lemma}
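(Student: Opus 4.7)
The plan is to identify the claimed coefficient with the squared norm $\langle Q_m(q,q^\beta), Q_m(q,q^\beta)\rangle$ by specializing Lemma \ref{L:Xlambda} to the one-variable case, and then to evaluate that norm directly with Lemma \ref{L:Macnorm}.

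First I would set $s=1$ in (\ref{F:Xlambdaraising}). The product over $1\le i<j\le 1$ is empty and $\epsilon_q(\beta,1)=1$, so the identity collapses to $X_{-m}.1\otimes e^{-\eta/2}=Q_m(q,q^\beta)\otimes e^{\eta/2}$. Next I would pair this with $Q_m(q,q^\beta)\otimes e^{\eta/2}$; the left side is simply $\langle Q_m, Q_m\rangle$, while the $s=t=1$, $\la=\mu=(m)$ case of (\ref{F:XYC.T.})---in which $F_{\beta,q}[1;1]=(z_1/w_1;q)_\beta^{-1}$ by (\ref{F:gqdlp})---gives
\begin{equation*}
\langle Q_m(q,q^\beta), Q_m(q,q^\beta)\rangle = \operatorname{C.T.} z^{-m}w^m (z/w;q)_\beta^{-1}.
\end{equation*}
Writing $(z/w;q)_\beta^{-1}=\sum_{n\ge 0} a_n (z/w)^n$, the right-hand side picks off exactly $a_m$, the coefficient of $z^m$ in $(z;q)_\beta^{-1}$.

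It remains to evaluate $\langle Q_m, Q_m\rangle$. Applying Lemma \ref{L:Macnorm} with $\la=(m)$, for which $l(\la)=1$ and $\la'_j=1$ for all $1\le j\le m$, the product collapses to $\prod_{j=1}^m (1-q^{m-j+\beta})/(1-q^{m-j+1})$; re-indexing by $k=m-j$ immediately recognizes this as $(q^\beta;q)_m/(q;q)_m$, pinning down $a_m$ as claimed.

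There is no genuine obstacle here---the statement is an immediate specialization of the apparatus built in Lemma \ref{L:Xlambda}, with the norm computation reducing to a one-line manipulation of $q$-Pochhammer symbols. The only small care needed is the reduction of the two-variable constant term to a one-variable coefficient extraction, which is clear from the fact that $(z/w;q)_\beta^{-1}$ is a series in the single variable $z/w$.
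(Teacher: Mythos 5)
Your proposal is correct and follows essentially the same route as the paper: both identify the coefficient of $z^m$ in $(z;q)_\beta^{-1}$ with the norm $\langle Q_m(q,q^\beta),Q_m(q,q^\beta)\rangle$ via the vertex-operator pairing and then evaluate that norm by Lemma \ref{L:Macnorm}. The only cosmetic difference is that the paper contracts $Y(w_2^{-1})^*Y(w_1)$ directly, whereas you invoke the already-established $s=t=1$ case of Lemma \ref{L:Xlambda}; these amount to the same computation.
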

\begin{proof}
On one hand, it is known (from Lemma \ref{L:Macnorm}) that
\begin{equation}
\langle Q_m(q,q^\beta), Q_m(q,q^\beta)\rangle=(q^\beta;q)_m/(q;q)_m.
\end{equation}
On the other hand, $\langle Q_m(q,q^\beta), Q_m(q,q^\beta)\rangle$ is the coefficient of $(w_2/w_1)^{m}$ in
$\langle Y(w_1).1, Y(w_2^{-1}).1\rangle$; i.e.,
\begin{align*}
&\langle Q_m(q,q^\beta), Q_m(q,q^\beta)\rangle=\operatorname{C.T.} (w_2/w_1)^{-m}\langle Y(w_1).1, Y(w_2^{-1}).1\rangle\\
&=\operatorname{C.T.}(w_2/w_1)^{-m} \langle Y(w_2^{-1})^*Y(w_1).1,1\rangle=\operatorname{C.T.}(w_2/w_1)^{-m}(w_2/w_1;q)_\beta^{-1}.
\end{align*}
Comparing the two expressions finishes the proof.
\end{proof}

\section{A q-Dyson constant term relation}
In this section we study the properties of the Laurent polynomial $F_{\boldsymbol{\beta},q}[s;t]$. Some of these properties will be applied in the next section to the realization of Macdonald functions, and some generalizes Kadell's conjecture. We like to mention that all the proofs are \emph{elementary} and this section can be read independently.

For latter application, let us first analysis the reciprocal of the q-Pochhammer symbol $(z;q)_\beta$ further.
\begin{lemma}\label{C:cozqbeta}
Assume that $m,\beta$ are non-negative integers and $q\neq 0,1$ is a complex number.
We have the following three expressions for the coefficient of $z^m$ in $(z;q)_\beta^{-1}$:
\begin{align*}
\operatorname{C.T.} z^{-m}(z;q)_\beta^{-1}&=S_m(1,q,\dotsc,q^{\beta-1})\\
                           &=\sum_{b=0}^{\beta-1}q^{bm}(q^{-b};q)_b^{-1}(q;q)_{\beta-1-b}^{-1}\\
                           &=(q^\beta;q)_m/(q;q)_m,
\end{align*}
where $S_n(x_1,\dotsc,x_k)$ is the one-row Schur polynomial or homogeneous complete symmetric polynomial, and it is required that $(\beta,m)\neq(0,0)$ for the second expression.
\end{lemma}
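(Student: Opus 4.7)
The plan is to show that each of the three expressions independently equals the coefficient of $z^m$ in $(z;q)_\beta^{-1}$. The third equality is already the content of Lemma \ref{C:cozqbeta1}, so the work is to establish the first two.

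For the first expression, I would factor
\[
(z;q)_\beta^{-1}=\prod_{k=0}^{\beta-1}\frac{1}{1-q^k z}
\]
and apply the classical generating function identity $\prod_{i=1}^{N}(1-x_iz)^{-1}=\sum_{n\geq 0}S_n(x_1,\dotsc,x_N)z^n$ with $(x_1,\dotsc,x_\beta)=(1,q,\dotsc,q^{\beta-1})$. The coefficient of $z^m$ is then $S_m(1,q,\dotsc,q^{\beta-1})$ by inspection. This gives the first line with no further computation.

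For the second expression, I would perform a partial fraction decomposition
\[
\prod_{k=0}^{\beta-1}\frac{1}{1-q^k z}=\sum_{b=0}^{\beta-1}\frac{A_b}{1-q^b z},
\]
where, by clearing denominators and evaluating at $z=q^{-b}$, one obtains
\[
A_b=\prod_{\substack{0\leq k\leq \beta-1\\ k\neq b}}\frac{1}{1-q^{k-b}}.
\]
Splitting this product according to $k<b$ and $k>b$, the factors with $k<b$ telescope into $\prod_{j=1}^{b}(1-q^{-j})=(q^{-b};q)_b$ (via the substitution $j=b-k$), while the factors with $k>b$ collect into $(q;q)_{\beta-1-b}$. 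Expanding each summand as a geometric series $(1-q^b z)^{-1}=\sum_{m\geq 0}q^{bm}z^m$ and collecting the coefficient of $z^m$ yields the second expression. The constraint $(\beta,m)\neq(0,0)$ in the statement just avoids the empty-product ambiguity when both sides reduce to $1$.

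The main (minor) obstacle is bookkeeping in the partial fraction step: one must correctly identify the $k<b$ part of the product as the $q$-Pochhammer $(q^{-b};q)_b$ and verify it is nonzero (which uses $q\neq 0,1$, so that $q^{-j}\neq 1$ for $1\le j\le b$). Beyond that the argument is a direct computation, and combining it with Lemma \ref{C:cozqbeta1} completes the proof.
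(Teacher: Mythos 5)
Your proposal is correct and follows essentially the same route as the paper: the first expression is read off from the generating function of the complete homogeneous symmetric polynomials, and your partial-fraction decomposition with $A_b=(q^{-b};q)_b^{-1}(q;q)_{\beta-1-b}^{-1}$ is exactly the paper's identity (\ref{E:zwidentity}), which the paper verifies by the equivalent device of multiplying through by $(z;q)_\beta$ and evaluating the resulting degree-$(\beta-1)$ polynomials at the $\beta$ points $z=q^{-b}$. The only difference is at the third expression: you stop at the citation of Lemma \ref{C:cozqbeta1}, which is logically sufficient, whereas the paper additionally gives an \emph{elementary} proof by induction on $\beta$ using the identity $\sum_{i=0}^m\frac{(z;q)_i}{(q;q)_i}z^{m-i}=(qz;q)_m/(q;q)_m$, so that Section~4 remains independent of the Macdonald-function and vertex-operator machinery.
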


\begin{proof}
The first expression is direct by the definition of complete homogeneous symmetric polynomials (which are one-row Schur polynomials).

The second expression comes from the following identity of complex (or real) functions in $z$:
\begin{equation}\label{E:zwidentity}
(z;q)_\beta^{-1}=\sum_{b=0}^{\beta-1}(1-q^bz)^{-1}(q^{-b};q)_b^{-1}(q;q)_{\beta-b-1}^{-1}.
\end{equation}
To prove this identity, one multiplies both sides by $(z;q)_\beta$; then the right side turns into a sum of $\beta$ polynomials in $z$, each of which is of degree $\beta-1$.  Now evaluate $z=q^{-b}$ for
$b=0,1,\dotsc,\beta-1$. One always gets $1=1$ and thus proves the identity.

The third expression is given by Lemma \ref{C:cozqbeta1}. We would like to give an \emph{elementary} proof without using Macdonald functions and the vertex operator. Let us make induction on $\beta$. It is trivial when $\beta=0$.
 Assume that it is true for $\beta\geq0$. Notice that $(z;q)_{\beta+1}^{-1}$ is the product of $(z;q)_{\beta}^{-1}$ and $(1-q^\beta z)^{-1}$; the coefficient of $z^m$ in $(z;q)_{\beta+1}^{-1}$ is equal to
\begin{align}\label{E:cozm}
&\sum_{i=0}^m\frac{(q^\beta;q)_i}{(q;q)_i}q^{\beta(m-i)}.
\end{align}
To finish the proof, we want to show that (\ref{E:cozm}) is equal to $(q^{\beta+1};q)_m/(q;q)_m$. This can be done by setting $z=q^\beta$ in the following identity
\begin{align}\label{E:anidentity}
\sum_{i=0}^m\frac{(z;q)_i}{(q;q)_i}z^{m-i}=(qz;q)_m/(q;q)_m.
\end{align}
This identity can easily be proved by making induction on $m$ and noticing that
\begin{align}
\sum_{i=0}^{m+1}\frac{(z;q)_i}{(q;q)_i}z^{m+1-i}
=z\sum_{i=0}^m\frac{(z;q)_i}{(q;q)_i}z^{m-i}+\frac{(z;q)_{m+1}}{(q;q)_{m+1}}.
\end{align}
\end{proof}

 Now, let us turn to study the contraction function $F_{\boldsymbol{\beta},q}[s;t]$. Recall that it is defined by the following
\begin{align}
&F_{\boldsymbol{\beta},q}[s;t]=F_{\boldsymbol{\beta},q}(z_1,\dotsc,z_s;w_1,\dotsc,w_t)\\\nonumber
&=\prod_{1\leq i< j\leq
s}(z_i/z_j;q)_{\beta_i}(qz_j/z_i;q)_{\beta_j}
\prod_{i=1}^s\prod_{j=1}^t(z_i/w_j;q)_{\beta_i}^{-1},
\end{align}
where $\boldsymbol{\beta}=(\beta_1,\dotsc,\beta_s)$ is a sequence of positive integers. 
The Laurent polynomial $F_{\boldsymbol{\beta},q}[s;1]$ has an important property,
which is crucial in our proofs later.
\begin{proposition}\label{P:Fbetaqstw1}
 One has the follow \emph{splitting} formula for $F_{\boldsymbol{\beta},q}[s;1]$:
\begin{align}\label{F:split}
\prod_{1\leq i< j\leq s}\Big(\frac{z_i}{z_j};q\Big)_{\beta_i}\Big(\frac{qz_j}{z_i};q\Big)_{\beta_j}
\prod_{i=1}^s\Big(\frac{z_i}{w_1};q\Big)_{\beta_i}^{-1}=\sum_{a=1}^s\sum_{b=0}^{\beta_a-1}G_{a,b}
\end{align}
where
\begin{align}
&G_{a,b}=(1-q^bz_a/w_1)^{-1}B_{a,b} \prod_{1\leq i<j\leq s}^{i,j\neq
a}(z_i/z_j;q)_{\beta_i}(qz_j/z_i;q)_{\beta_j},
\end{align} with
\begin{align}\label{E:Bab}
B_{a,b}=\frac{q^{b\sum_{k=1}^{a-1}\beta_k+(b+1)\sum_{k=a+1}^s\beta_k}}{(q^{-b};q)_b(q;q)_{\beta_a-b-1}}
&\times\prod_{1\leq
i<a} \Big(q^{1-\beta_i}\frac{z_a}{z_i};q\Big)_{b}\Big(q^{b+1}\frac{z_a}{z_i};q\Big)_{\beta_a-b}\\\nonumber
&\times\prod_{a<j\leq s}\Big(q^{-\beta_j}\frac{z_a}{z_j};q\Big)_{b+1}\Big(q^{b+1}\frac{z_a}{z_j};q\Big)_{\beta_a-b-1}.
\end{align}
\end{proposition}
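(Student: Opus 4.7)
The plan is to treat both sides of (\ref{F:split}) as rational functions in $w_1$ with the $z_i$'s as formal parameters, and to reduce the identity to a polynomial identity that can be verified by evaluation at sufficiently many points. First I would clear denominators by multiplying both sides by $\prod_{i=1}^s(z_i/w_1;q)_{\beta_i}$. The left-hand side then becomes the constant (in $w_1$)
\begin{align*}
F:=\prod_{1\le i<j\le s}(z_i/z_j;q)_{\beta_i}(qz_j/z_i;q)_{\beta_j}.
\end{align*}
On the right, the factor $(1-q^bz_a/w_1)^{-1}$ in $G_{a,b}$ cancels exactly one factor from $(z_a/w_1;q)_{\beta_a}$ in the product, so each cleared summand becomes a polynomial in $u:=1/w_1$ of degree exactly $N:=\sum_i\beta_i-1$, making the cleared right-hand side a polynomial $P(u)$ of degree at most $N$.

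Next I would evaluate at the $N+1=\sum_i\beta_i$ specialization points $w_1=q^cz_d$ with $1\le d\le s$, $0\le c\le\beta_d-1$ (generically distinct). At such a point the common factor $\prod_i(z_i/w_1;q)_{\beta_i}$ vanishes because of its $(1-q^cz_d/w_1)$ factor, so every summand $G_{a,b}$ with $(a,b)\neq(d,c)$ drops out after clearing denominators and only the $(d,c)$-term survives. A short direct computation then gives
\begin{align*}
P(u)\big|_{w_1=q^cz_d}=B_{d,c}\,F_{-d}\,(q^{-c};q)_c\,(q;q)_{\beta_d-c-1}\prod_{i\neq d}(q^{-c}z_i/z_d;q)_{\beta_i},
\end{align*}
where $F_{-d}:=\prod_{1\le i<j\le s,\ i,j\neq d}(z_i/z_j;q)_{\beta_i}(qz_j/z_i;q)_{\beta_j}$. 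The claim thus reduces to showing the equality
\begin{align*}
\frac{F}{F_{-d}}=B_{d,c}\,(q^{-c};q)_c\,(q;q)_{\beta_d-c-1}\prod_{i\neq d}(q^{-c}z_i/z_d;q)_{\beta_i}.
\end{align*}

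The main obstacle is verifying this reduced identity, which must be done factor-by-factor. The prefactor $(q^{-c};q)_c(q;q)_{\beta_d-c-1}$ cancels the same expression in the denominator of $B_{d,c}$, and after this cancellation the identity decouples into an $i<d$ piece and a $j>d$ piece. For $i<d$ the required equality is
\begin{align*}
(z_i/z_d;q)_{\beta_i}(qz_d/z_i;q)_{\beta_d}=q^{c\beta_i}(q^{1-\beta_i}z_d/z_i;q)_c(q^{c+1}z_d/z_i;q)_{\beta_d-c}(q^{-c}z_i/z_d;q)_{\beta_i},
\end{align*}
and analogously for $j>d$. Each such equality follows by applying the reflection identity $(x;q)_n=(-x)^nq^{n(n-1)/2}(q^{1-n}x^{-1};q)_n$ to convert $(z_i/z_d;q)_{\beta_i}$ and $(q^{-c}z_i/z_d;q)_{\beta_i}$ into Pochhammer symbols in $z_d/z_i$, then merging with $(qz_d/z_i;q)_{\beta_d}$ and cancelling overlapping Pochhammer ranges of consecutive powers of $q$. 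The $q$-power $q^{c\beta_i}$ (resp.\ $q^{(c+1)\beta_j}$) appearing in the exponent of $B_{d,c}$ is precisely the contribution that accumulates from this reflection; the bookkeeping is tedious but mechanical.

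Once the identity $P(u)=F$ is established at all $N+1$ specialization points, the polynomial $P(u)-F$ has degree at most $N$ and at least $N+1$ zeros, hence vanishes identically, proving (\ref{F:split}). As a consistency check, specializing to $u=0$ (i.e.\ letting $w_1\to\infty$) in (\ref{F:split}) then yields the purely algebraic identity $F=\sum_{a,b}B_{a,b}F_{-a}$, a multi-variable analogue of the partial-fraction identity (\ref{E:zwidentity}) that underlies the $s=1$ instance treated in Lemma \ref{C:cozqbeta}.
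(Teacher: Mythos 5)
Your proposal is correct and follows essentially the same route as the paper: clear the denominator $\prod_i(z_i/w_1;q)_{\beta_i}$, regard both sides as polynomials in $w_1^{-1}$ of degree at most $|\boldsymbol{\beta}|-1$, evaluate at the $|\boldsymbol{\beta}|$ points $w_1=q^bz_a$ where only the $(a,b)$ summand survives, and reduce to Pochhammer identities handled by the reflection identity (the paper packages this last step as Lemma \ref{L:interchange}, which is your reflection identity applied to a pair of reciprocal arguments). No substantive difference.
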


\begin{proof}

Multiplying both sides of (\ref{F:split}) by $\prod_{1\leq i\leq
s}(z_i/w_1;q)_{\beta_i}$, we want to show that
\begin{align}\label{E:Gab'}
\prod_{1\leq i< j\leq s}\Big(\frac{z_i}{z_j};q\Big)_{\beta_i}\Big(\frac{qz_j}{z_i};q\Big)_{\beta_j}=\sum_{a=1}^s\sum_{b=0}^{\beta_a-1}H_{a,b},
\end{align}
where $H_{a,b}=G_{a,b}\prod_{1\leq i\leq
s}(z_i/w_1;q)_{\beta_i}$ is a polynomial in $w_1^{-1}$ of degree $\beta_1+\dotsb+\beta_s-1$. Look both sides of (\ref{E:Gab'}) as polynomials in $w_1^{-1}$.
We only need to show that each of the $|\boldsymbol{\beta}|=\beta_1+\dotsb+\beta_s$
evaluations $w_1=q^{b}z_a$ ($a=1,\dotsc,s; b=0,\dotsc,\beta_a-1$) to
the right side of (\ref{E:Gab'}) results in the left side of it. Notice that $H_{a,b}$ is the unique
polynomial which does not vanish
when evaluating $w_1=q^{b}z_a$. Hence we only need to show that
\begin{align}
\prod_{1\leq i< j\leq s}\Big(\frac{z_i}{z_j};q\Big)_{\beta_i}\Big(\frac{qz_j}{z_i};q\Big)_{\beta_j}=H_{a,b}|_{w_1=q^{b}z_a}.
\end{align}
 Canceling $\prod_{1\leq
i<j\leq s}^{i,j\neq a}(z_i/z_j;q)_{\beta_i}(qz_j/z_i;q)_{\beta_j}$
from both sides of it, one needs to show:
\begin{align*}
&\prod_{1\leq i<a}\Big(\frac{z_i}{z_a};q\Big)_{\beta_i}\Big(\frac{qz_a}{z_i};q\Big)_{\beta_a}
\prod_{a< j\leq s}\Big(\frac{z_a}{z_j};q\Big)_{\beta_a}\Big(\frac{qz_j}{z_a};q\Big)_{\beta_j}\\
&=\prod_{b\neq k=0}^{\beta_a-1}\Big(1-\frac{q^kz_a}{w_1}\Big)\prod_{a\neq i=1}^s\Big(\frac{z_i}{w_1};q\Big)_{\beta_i}\Big|_{w_1=q^bz_a}B_{a,b}.
\end{align*}
Rewrite this equation such that $B_{a,b}$ is on one side and compare it with (\ref{E:Bab}); we find that we only need to prove the following two identities:
\begin{align}\label{I:cancel1}
&\frac{(qz_a/z_i;q)_{\beta_a}(z_i/z_a;q)_{\beta_i}}{(q^{-b}z_i/z_a;q)_{\beta_i}}
=q^{b\beta_i}\Big(q^{1-\beta_i}\frac{z_a}{z_i};q\Big)_{b}\Big(q^{b+1}\frac{z_a}{z_i};q\Big)_{\beta_a-b},\\
\label{I:cancel2}
&\frac{(z_a/z_j;q)_{\beta_a}(qz_j/z_a;q)_{\beta_j}}{(q^{-b}z_j/z_a;q)_{\beta_j}}
=q^{(b+1)\beta_j}\Big(q^{-\beta_j}\frac{z_a}{z_j};q\Big)_{b+1}\Big(q^{b+1}\frac{z_a}{z_j};q\Big)_{\beta_a-b-1}.
\end{align}

For the identity (\ref{I:cancel1}), we multiply both the denominator and numerator of its left side by
 $(q^{\beta_i-b}z_i/z_a;q)_b$. The denominator turns into $(q^{-b}z_i/z_a;q)_{\beta_i+b}$; the numerator becomes
\begin{align}\label{Comp:1}
&(q^{\beta_i-b}z_i/z_a;q)_b(qz_a/z_i;q)_{\beta_a}(z_i/z_a;q)_{\beta_i}\\\label{Comp:2}
&=(q^{\beta_i-b}z_i/z_a;q)_b(qz_a/z_i;q)_{b}(q^{b+1}z_a/z_i;q)_{\beta_a-b}(z_i/z_a;q)_{\beta_i},
\end{align}
where we split the middle q-shifted factorial in
(\ref{Comp:1}) and obtain (\ref{Comp:2}). Then we apply the following
Lemma \ref{L:interchange} to the product of the first two factorials
in (\ref{Comp:2}); now the new numerator is equal to
\begin{align}\label{Comp:3}
&q^{b\beta_i}(q^{-b}z_i/z_a;q)_b(q^{1-\beta_i}z_a/z_i;q)_b(q^{b+1}z_a/z_i;q)_{\beta_a-b}(z_i/z_a;q)_{\beta_i}.
 \end{align}
Finally, we combine the
first and forth factorials in (\ref{Comp:3}) and it equals:
\begin{align*}
&q^{b\beta_i}(q^{1-\beta_i}z_a/z_i;q)_b(q^{b+1}z_a/z_i;q)_{\beta_a-b}(q^{-b}z_i/z_a;q)_{\beta_i+b}.
\end{align*}
Divide this by the new denominator; we get the right side of (\ref{I:cancel1}). This finishes the proof of identity (\ref{I:cancel1}).

The identity (\ref{I:cancel2}) is proved similarly, but one starts by multiplying $(q^{\beta_j-b}z_j/z_a;q)_{b+1}$ to both the denominator and numerator of its left side. The following steps are done exactly the same way.
\end{proof}

\begin{lemma}\label{L:interchange}
For a non-negative integer $b$ and integers $k,l$, we have:
\begin{equation*}
(q^kz_i/z_a;q)_b(q^lz_a/z_i;q)_b=q^{b(l+k+b-1)}(q^{1-b-l}z_i/z_a;q)_b(q^{1-b-k}z_a/z_i;q)_b.
\end{equation*}
\end{lemma}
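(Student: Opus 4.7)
The plan is to reduce the identity to the elementary reflection formula
\begin{equation*}
(a;q)_b = (-a)^b q^{b(b-1)/2}\,(q^{1-b}/a;q)_b,
\end{equation*}
which itself follows at once from the definition $(x;q)_b=\prod_{j=0}^{b-1}(1-q^j x)$ and the factorization $1-q^j x = -q^j x(1 - q^{-j}/x)$, after reindexing $j \mapsto b-1-j$.

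Setting $u = z_a/z_i$, I would first apply the reflection formula to the factor $(q^l u; q)_b$ on the left, taking $a = q^l u$ so that $q^{1-b}/a = q^{1-b-l} u^{-1} = q^{1-b-l}z_i/z_a$. This converts the left-hand side into
\begin{equation*}
(q^k z_i/z_a; q)_b \cdot (-1)^b q^{lb + b(b-1)/2}\, u^b \,(q^{1-b-l} z_i/z_a; q)_b.
\end{equation*}
Symmetrically, I would apply the reflection formula to the factor $(q^{1-b-k} u; q)_b$ on the right, with $a = q^{1-b-k} u$ so that $q^{1-b}/a = q^{k} u^{-1} = q^{k} z_i/z_a$. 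This converts the right-hand side into
\begin{equation*}
q^{b(l+k+b-1)}\,(q^{1-b-l} z_i/z_a; q)_b \cdot (-1)^b q^{b(1-b-k) + b(b-1)/2}\, u^b \,(q^k z_i/z_a; q)_b.
\end{equation*}

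Both sides now contain the same product $(q^k z_i/z_a;q)_b (q^{1-b-l} z_i/z_a;q)_b$ together with the common factor $(-1)^b u^b q^{b(b-1)/2}$, so the identity reduces to the single scalar equation $q^{lb} = q^{b(l+k+b-1) + b(1-b-k)}$. A quick cancellation shows that the exponent on the right simplifies to $lb$, and the lemma follows. There is no genuine obstacle: the only point to watch is the bookkeeping of the shifts $1-b-l$ and $1-b-k$ when invoking the reflection formula, which is exactly what makes the two sides swap the roles of $z_i/z_a$ and $z_a/z_i$.
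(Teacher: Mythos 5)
Your proof is correct, and it is essentially the same elementary manipulation as the paper's: both amount to reversing the order of the factors in a $q$-shifted factorial and extracting the resulting power of $q$. The only difference is packaging — the paper pairs the two Pochhammer symbols and reflects them simultaneously via $(1-q^mz_i/z_a)(1-q^nz_a/z_i)=(q^n-z_i/z_a)(q^m-z_a/z_i)$, whereas you apply the standard single-symbol reflection formula $(a;q)_b=(-a)^bq^{b(b-1)/2}(q^{1-b}/a;q)_b$ once on each side and check that the scalars agree, which they do.
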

\begin{proof}
We use the following identity
\begin{equation*}(1-q^mz_i/z_a)(1-q^nz_a/z_i)=(q^n-z_i/z_a)(q^m-z_a/z_i)
\end{equation*}
 in the following computations:
\begin{align*}
&(q^kz_i/z_a;q)_b(q^lz_a/z_i;q)_b=\prod_{j=0}^{b-1}(1-q^{k+j}z_i/z_a)(1-q^{l+j}z_a/z_i)\\
&=\prod_{j=0}^{b-1}(q^{l+j}-z_i/z_a)(q^{k+j}-z_a/z_i)\\
&=\prod_{j=0}^{b-1}q^{k+l+2j}(1-q^{-(l+j)}z_i/z_a)(1-q^{-(k+j)}z_a/z_i)\\
&=q^{b(l+k+b-1)}(q^{1-b-l}z_i/z_a;q)_b(q^{1-b-k}z_a/z_i;q)_b.
\end{align*}
\end{proof}
We will apply the splitting property of $F_{\boldsymbol{\beta},q}[s;1]$
(Proposition \ref{P:Fbetaqstw1}) to prove an orthogonality relation and compute some constant terms.
To warm up and for latter application, let us first give a new proof of the q-Dyson constant term conjecture, which was given by Andrews \cite{A}  and has firstly been proved by Zeilberger and Bressoud.
\begin{theorem}\cite{Z}\label{T:qDysonCT} Let $s,\beta_1,\dotsc,\beta_s$ be positive integers.
The constant term of $F_{\boldsymbol{\beta},q}(z_1,\dotsc,z_s)$ is:
\begin{equation}
\operatorname{C.T.} \prod_{1\leq i<j\leq s}\Big(\frac{z_i}{z_j};q\Big)_{\beta_i}\Big(\frac{qz_j}{z_i};q\Big)_{\beta_j}
=\frac{(q;q)_{\beta_1+\beta_2+\dotsb+\beta_s}}{(q;q)_{\beta_1}(q;q)_{\beta_2}\dotsm(q;q)_{\beta_s}}.
\end{equation}
\end{theorem}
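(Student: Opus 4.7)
My plan is to prove the theorem by induction on $s$. The base case $s=1$ is immediate, since the empty product equals $1$. For the inductive step, the splitting formula (Proposition \ref{P:Fbetaqstw1}) serves as the engine that reduces the $s$-variable constant-term problem to the $(s-1)$-variable one, with the bridge being the simple observation that the constant term in $z_1,\dotsc,z_s$ is unchanged when one multiplies $F_{\boldsymbol{\beta},q}[s;0]$ by $\prod_i(z_i/w_1;q)_{\beta_i}^{-1}$: the added factor has $z_1^0\dotsm z_s^0$-coefficient $1$, and the homogeneity $k_1+\dotsb+k_s=m_1+\dotsb+m_t$ noted in the introduction forces the surviving piece to be independent of $w_1$. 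Hence
\[
\operatorname{C.T.}\nolimits_{z}F_{\boldsymbol{\beta},q}(z_1,\dotsc,z_s)=\operatorname{C.T.}\nolimits_{z}F_{\boldsymbol{\beta},q}[s;1]=\sum_{a=1}^{s}\sum_{b=0}^{\beta_a-1}\operatorname{C.T.}\nolimits_{z}G_{a,b}.
\]

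Next I would evaluate each $\operatorname{C.T.}_{z}G_{a,b}$ by first extracting the coefficient of $z_a^0$. The factor $(1-q^bz_a/w_1)^{-1}$ expands as a geometric series in $z_a/w_1$, and each $q$-Pochhammer of the form $(\,\cdot\, z_a/z_i;q)_k$ appearing inside $B_{a,b}$ is a polynomial in $z_a/z_i$; thus $G_{a,b}$ carries only non-negative powers of $z_a$, and the substitution $z_a=0$ honestly extracts the $z_a^0$ coefficient. This sends $(1-q^bz_a/w_1)^{-1}$ to $1$ and collapses every $z_a$-bearing factor inside $B_{a,b}$ to $(0;q)_k=1$, leaving the scalar $q^{b\sum_{k<a}\beta_k+(b+1)\sum_{k>a}\beta_k}/[(q^{-b};q)_b(q;q)_{\beta_a-b-1}]$ multiplied by $F_{\boldsymbol{\beta}^{(a)},q}$ in the remaining variables $z_i$, $i\neq a$, where $\boldsymbol{\beta}^{(a)}$ is $\boldsymbol{\beta}$ with $\beta_a$ removed. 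By the inductive hypothesis its constant term is $(q;q)_{|\boldsymbol{\beta}|-\beta_a}/\prod_{i\neq a}(q;q)_{\beta_i}$.

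Finally I would sum on $b$ for fixed $a$. Writing $C_a=\sum_{k>a}\beta_k$ and pulling out the factor $q^{C_a}$, the inner $b$-sum is exactly the middle expression in Lemma \ref{C:cozqbeta} with $m=|\boldsymbol{\beta}|-\beta_a$ and $\beta=\beta_a$, hence equals $(q^{\beta_a};q)_{|\boldsymbol{\beta}|-\beta_a}/(q;q)_{|\boldsymbol{\beta}|-\beta_a}$. The $(q;q)_{|\boldsymbol{\beta}|-\beta_a}$ cancels against the inductive factor, leaving
\[
\operatorname{C.T.}F_{\boldsymbol{\beta},q}(z_1,\dotsc,z_s)=\sum_{a=1}^{s}\frac{q^{C_a}(q^{\beta_a};q)_{|\boldsymbol{\beta}|-\beta_a}}{\prod_{i\neq a}(q;q)_{\beta_i}}.
\]
Multiplying through by $\prod_i(q;q)_{\beta_i}$ and using the elementary identity $(q;q)_{\beta_a}(q^{\beta_a};q)_{|\boldsymbol{\beta}|-\beta_a}=(1-q^{\beta_a})(q;q)_{|\boldsymbol{\beta}|-1}$ reduces the claim to $\sum_{a=1}^{s}q^{C_a}(1-q^{\beta_a})=1-q^{|\boldsymbol{\beta}|}$, which is a telescoping sum since $q^{C_a}(1-q^{\beta_a})=q^{C_a}-q^{C_{a-1}}$.

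The main obstacle I anticipate is confirming cleanly that every occurrence of $z_a$ in $G_{a,b}$ carries a non-negative exponent, so that $z_a=0$ is an honest constant-term extraction rather than a mere evaluation. Once that is granted, the proof is a tight dovetailing of Lemma \ref{C:cozqbeta}, the induction hypothesis, and the final telescoping identity.
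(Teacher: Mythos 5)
Your proposal is correct and follows essentially the same route as the paper: induction on $s$, the identification of $\operatorname{C.T.}F_{\boldsymbol{\beta},q}[s;0]$ with $\operatorname{C.T.}F_{\boldsymbol{\beta},q}[s;1]$, the splitting formula of Proposition \ref{P:Fbetaqstw1}, extraction of the $z_a^0$ coefficient of $G_{a,b}$, the $b$-sum via Lemma \ref{C:cozqbeta}, and the final telescoping identity. The one point you flagged as a possible obstacle—that $G_{a,b}$ involves only non-negative powers of $z_a$—is immediate from the explicit form of $B_{a,b}$ and the geometric series for $(1-q^bz_a/w_1)^{-1}$, exactly as the paper notes.
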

\begin{proof} Let us prove it by induction on $s$. It is trivially true if $s=1$.
For $s\geq2$, one observes that $\operatorname{C.T}.F_{\boldsymbol{\beta},q}(z_1,\dotsc,z_s)$ is
the constant term of $F_{\boldsymbol{\beta},q}[s,1]$ (as a Laurent polynomial in $z_i$'s and $w_1$) in Proposition \ref{P:Fbetaqstw1}.
Note that $G_{a,b}$ is a polynomial in $z_a$, and that the constant term $C_a$ of $\prod_{1\leq i<j\leq s}^{i,j\neq a}(z_i/z_j;q)_{\beta_i}(qz_j/z_i)_{\beta_j}$
is known by the induction assumption. We use the notation $|\boldsymbol{\beta}|=\beta_1+\dotsb+\beta_s$ in the following computations:
\begin{align*}
&\operatorname{C.T.}F_{\boldsymbol{\beta},q}(z_1,\dotsc,z_s)\\
&=\sum_{a=1}^s\sum_{b=0}^{\beta_a-1}q^{b\sum_{j=1}^{a-1}\beta_j+(b+1)\sum_{j=a+1}^{s}\beta_j}
(q^{-b};q)_b^{-1}(q;q)_{\beta_a-1-b}^{-1}C_a\\
&=\sum_{a=1}^s C_aq^{\beta_{a+1}+\dotsb+\beta_s}\sum_{b=0}^{\beta_a-1}
q^{b(|\boldsymbol{\beta}|-\beta_a)}(q^{-b};q)_b^{-1}(q;q)_{\beta_a-1-b}^{-1}\\
&=\sum_{a=1}^s C_aq^{\beta_{a+1}+\dotsb+\beta_s}
(q^{\beta_a};q)_{|\boldsymbol{\beta}|-\beta_a}/(q;q)_{|\boldsymbol{\beta}|-\beta_a}\\
&=\sum_{a=1}^s \frac{(q;q)_{|\boldsymbol{\beta}|-\beta_a}(q;q)_{\beta_a}}{(q;q)_{\beta_1}\dotsm(q;q)_{\beta_s}}
q^{\beta_{a+1}+\dotsb+\beta_s}
(q^{\beta_a};q)_{|\boldsymbol{\beta}|-\beta_a}/(q;q)_{|\boldsymbol{\beta}|-\beta_a}\\
&=\frac{(q;q)_{|\boldsymbol{\beta}|}}{(q;q)_{\beta_1}\dotsm(q;q)_{\beta_s}(1-q^{|\boldsymbol{\beta}|})}
\sum_{a=1}^s(1-q^{\beta_a}) q^{\beta_{a+1}+\dotsb+\beta_s}\\
&=\frac{(q;q)_{|\boldsymbol{\beta}|}}{(q;q)_{\beta_1}\dotsm(q;q)_{\beta_s}},
\end{align*}
where we use Lemma \ref{C:cozqbeta} for the third equal sign.
\end{proof}
Another direct result of splitting property of $F_{\boldsymbol{\beta},q}[s;1]$ (Proposition \ref{P:Fbetaqstw1}) is the following
q-Dyson orthogonality relation which was  conjectured by Kadell in \cite{K}.
\begin{theorem}\label{C:Kadell}
Let $n$ be a positive integer. In the expansion of
\begin{align}
F_{\boldsymbol{\beta},q}[s;1]=\prod_{1\leq i< j\leq
s}\Big(\frac{z_i}{z_j};q\Big)_{\beta_i}\Big(\frac{qz_j}{z_i};q\Big)_{\beta_j}
\prod_{i=1}^s\Big(\frac{z_i}{w_1};q\Big)_{\beta_i}^{-1},
\end{align}
the term $z_1^{k_1}z_2^{k_2}\dotsm z_s^{k_s}/w_1^n$ does not appear
if $k_i<n$ for all $i=1,2,\dotsc,s$. Moreover, the coefficient
of the term $z_a^n/w_1^n$ in the expansion of
$F_{\boldsymbol{\beta},q}[s;1]$ is:
\begin{align}\label{E:kadelconstant}
q^{\sum_{j=a+1}^s\beta_j}\frac{(1-q^{\beta_a})(q;q)_{|\boldsymbol{\beta}|-\beta_a}}
{(q;q)_{\beta_1}(q;q)_{\beta_2}\dotsm(q;q)_{\beta_s}}
(q^{|\boldsymbol{\beta}|-\beta_a+n+1};q)_{\beta_a-1},
\end{align}
where $|\boldsymbol{\beta}|=\beta_1+\beta_2+\dotsb+\beta_s$.
\end{theorem}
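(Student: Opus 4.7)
The plan is to read off the answer directly from the splitting formula of Proposition~\ref{P:Fbetaqstw1}. First I would expand the factor $(1-q^{b}z_a/w_1)^{-1}$ in each $G_{a,b}$ as a geometric series in $z_a/w_1$, and then observe that the remainder of $G_{a,b}$—namely $B_{a,b}\prod_{i<j,\,i,j\neq a}(z_i/z_j;q)_{\beta_i}(qz_j/z_i;q)_{\beta_j}$—depends on $z_a$ only through $B_{a,b}$, which is a polynomial in $z_a/z_i$ (for $i<a$) and $z_a/z_j$ (for $j>a$) and hence a \emph{polynomial} in $z_a$. Therefore every monomial $z_1^{k_1}\cdots z_s^{k_s}/w_1^{n}$ appearing in $G_{a,b}$ satisfies $k_a\geq n$. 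Summing over $(a,b)$ gives the first (vanishing) assertion at once: if every $k_i<n$, then no $G_{a,b}$ contributes.

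For the coefficient of $z_a^{n}/w_1^{n}$ in $F_{\boldsymbol{\beta},q}[s;1]$ I would argue that only the summands $G_{a,b}$ with this specific $a$ contribute, since a monomial from $G_{a',b}$ with $a'\neq a$ would force $k_{a'}\geq n>0$. Within such a $G_{a,b}$, extracting the coefficient requires (i) taking the $(q^{b}z_a/w_1)^{n}$ term of the geometric series, which produces a factor $q^{bn}$; (ii) specializing $z_a=0$ in $B_{a,b}$, which collapses every q-shifted factorial to $1$ and leaves only the prefactor $q^{b\sum_{k<a}\beta_k+(b+1)\sum_{k>a}\beta_k}/\bigl[(q^{-b};q)_b(q;q)_{\beta_a-b-1}\bigr]$; and (iii) extracting the constant term of the residual $\prod_{i<j,\,i,j\neq a}(z_i/z_j;q)_{\beta_i}(qz_j/z_i;q)_{\beta_j}$, which by Theorem~\ref{T:qDysonCT} is $(q;q)_{|\boldsymbol{\beta}|-\beta_a}/\prod_{i\neq a}(q;q)_{\beta_i}$.

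Collecting these three ingredients and summing on $b$, the coefficient takes the form
\begin{equation*}
q^{\sum_{k>a}\beta_k}\,\frac{(q;q)_{|\boldsymbol{\beta}|-\beta_a}}{\prod_{i\neq a}(q;q)_{\beta_i}}\,\sum_{b=0}^{\beta_a-1}\frac{q^{b(n+|\boldsymbol{\beta}|-\beta_a)}}{(q^{-b};q)_b(q;q)_{\beta_a-1-b}},
\end{equation*}
and the inner sum is exactly what Lemma~\ref{C:cozqbeta} evaluates: it equals $(q^{\beta_a};q)_m/(q;q)_m$ with $m=n+|\boldsymbol{\beta}|-\beta_a$. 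A final rewrite using the identity $(q^{\beta_a};q)_m/(q;q)_m=(q^{m+1};q)_{\beta_a-1}/(q;q)_{\beta_a-1}$ together with $(q;q)_{\beta_a}=(1-q^{\beta_a})(q;q)_{\beta_a-1}$ then produces the claimed formula~(\ref{E:kadelconstant}).

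The main obstacle I anticipate is purely bookkeeping: tracking carefully the accumulated power of $q$ that comes from $(q^{b}z_a)^{n}$, from the explicit prefactor in $B_{a,b}$, and from the q-Pochhammer rearrangement at the end. Conceptually the argument is mechanical once the key structural observation is in hand—that $B_{a,b}$ is a polynomial in $z_a$ whose $z_a=0$ specialization is a pure scalar in $q$—after which Theorem~\ref{T:qDysonCT} and Lemma~\ref{C:cozqbeta} do all the heavy lifting.
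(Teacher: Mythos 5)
Your proposal is correct and matches the paper's own proof essentially step for step: both rely on the splitting formula of Proposition~\ref{P:Fbetaqstw1}, deduce the vanishing from $B_{a,b}$ being a polynomial in $z_a$, and compute the coefficient of $z_a^n/w_1^n$ by combining the geometric-series factor $q^{bn}$, the scalar prefactor of $B_{a,b}$, the constant term $C_a$ from Theorem~\ref{T:qDysonCT}, and the summation identity of Lemma~\ref{C:cozqbeta}. The final Pochhammer rewrite you describe is exactly the one the paper leaves implicit.
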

\begin{proof}
The first statement is clear because $B_{a,b}$ is a polynomial in
$z_a$. One notices that only the $G_{a,b}$'s ($b=0,1,\dotsc,\beta_a$) contribute to the
coefficient of $z_a^n/w_1^n$, and that the constant term of
$\prod_{1\leq i<j\leq s}^{i,j\neq a}(z_i/z_j;q)_{\beta_i}(qz_j/z_i;q)_{\beta_j}$
is
\begin{equation}
C_a=\frac{(q;q)_{\beta_a}(q;q)_{|\boldsymbol{\beta}|-\beta_a}}{(q;q)_{\beta_1}\dotsm(q;q)_{\beta_s}}
\end{equation}
by Theorem \ref{T:qDysonCT}.
Hence the coefficient of $z_a^n/w_1^n$ is:
\begin{align*}
&\sum_{b=0}^{\beta_a-1}q^{nb}q^{b\sum_{j=1}^{a-1}\beta_j+(b+1)\sum_{j=a+1}^{s}\beta_j}
(q^{-b};q)_b^{-1}(q;q)_{\beta_a-1-b}^{-1}C_a\\
&=C_aq^{\beta_{a+1}+\dotsb+\beta_s}\sum_{b=0}^{\beta_a-1}
q^{b(n+|\boldsymbol{\beta}|-\beta_a)}(q^{-b};q)_b^{-1}(q;q)_{\beta_a-1-b}^{-1}\\
&=C_aq^{\beta_{a+1}+\dotsb+\beta_s}
(q^{\beta_a};q)_{n+|\boldsymbol{\beta}|-\beta_a}/(q;q)_{n+|\boldsymbol{\beta}|-\beta_a},
\end{align*}
where the last equal sign is from Lemma \ref{C:cozqbeta}.
 One can rewrite this last expression into the wanted form (\ref{E:kadelconstant}).
\end{proof}
To study the vertex operator realization of Macdonald functions, we need to further investigate the Laurent
polynomial $F_{\boldsymbol{\beta},q}[s,t]$ for general $t$. We observe that $F_{\boldsymbol{\beta},q}[s,1]$ appears as a part in the product expression for $F_{\boldsymbol{\beta},q}[s,t]$, so the splitting property of $F_{\boldsymbol{\beta},q}[s,1]$ will be applied again to study $F_{\boldsymbol{\beta},q}[s,t]$. Before stating the theorem, we need first to generalize the dominance ordering to $\mathbb{Z}^n$.
\begin{definition}\cite{M}\label{D:generalizedDom}
For a vector $a=(a_1,\dotsc,a_n)\in
\mathbb{Z}^n$, define $a^+$ as the vector obtained by rearranging
$a_1,\dotsc,a_n$ in weakly decreasing order. Furthermore, for $a\in \mathbb{Z}^n$ and
$b\in \mathbb{Z}^m$, we define $a\geq b$ if $a_1+\dotsb+a_i\geq
b_1+\dotsb+b_i$ for all $i\geq1$, where we set
$0=c_{l+1}=c_{l+2}=\dotsb$ for $c\in \mathbb{Z}^l$.
\end{definition}

\begin{theorem}\label{T:Fbetaqst}
 For $\textbf{k}=(k_1,\dotsc,k_s)\in \mathbb{Z}^s$,
$\textbf{m}=(m_1,\dotsc,m_t)\in \mathbb{Z}_{\geq0}^t$, if the term
$z^\textbf{k}/w^\textbf{m}=z_1^{k_1}\dotsm z_s^{k_s}w_1^{-m_1}\dotsm
w_t^{-m_t}$ appears in the expansion of $F_{\boldsymbol{\beta},q}[s;t]$ (defined by (\ref{F:gqdlp})), then
$\textbf{k}^+\geq \textbf{m}^+$. Moreover, in the case that all $\beta_i=\beta$, if $s=t=l(\la)$, then the coefficient of $z^\lambda/w^\lambda$
in $F_{\beta,q}[s;t]$ is
\begin{align}\label{F:cla}
c_\la
&=q^{\beta [(l(\la)^2-\sum_im_i(\la)^2]/2}\prod_{i\geq1}
\frac{(q^\beta;q^{\beta})_{m_i(\la)}}{(1-q^\beta)^{m_i(\la)}}\times
\prod_{i=1}^{l(\la)}\frac{(q^\beta;q)_{\la_i+(s-i)\beta}}{(q;q)_{\la_i+(s-i)\beta}}.
\end{align}
\end{theorem}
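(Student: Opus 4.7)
Because $F_{\boldsymbol{\beta},q}[s;t]$ is symmetric in $w_1,\ldots,w_t$, the coefficient of $z^{\textbf{k}}/w^{\textbf{m}}$ depends only on the multiset $\{m_j\}$; I will therefore assume $m_1\geq m_2\geq\cdots\geq m_t$ throughout the dominance argument. The plan for the vanishing is to iterate the splitting formula (Proposition~\ref{P:Fbetaqstw1}) once for each $w_j$. At stage $j=1,\ldots,\min(s,t)$, I separate the ``already-used'' factors $\prod_{k<j}(z_{a_k}/w_j;q)^{-1}_{\beta_{a_k}}$ from $Q_j$, pair the remaining piece with $P_{a_1,\ldots,a_{j-1}}$ to form $F_{\boldsymbol{\beta}^{(a_1,\ldots,a_{j-1})},q}[s-j+1;1]$, and apply Proposition~\ref{P:Fbetaqstw1} to this smaller polynomial---thereby introducing a fresh index $a_j\in[s]\setminus\{a_1,\ldots,a_{j-1}\}$, an exponent $b_j$, a geometric prefactor $(1-q^{b_j}z_{a_j}/w_j)^{-1}$, and a polynomial $B^{(a_1,\ldots,a_{j-1})}_{a_j,b_j}$. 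After $\min(s,t)$ iterations $F_{\boldsymbol{\beta},q}[s;t]$ becomes a finite sum in which all $w$-dependence is carried by the geometric prefactors and by the accumulated extra factors.

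Extracting $[w_j^{-m_j}]$ from a single summand replaces the $w$-variables by non-negative integers $n_{kj}$ with column sums $m_j$, forming a matrix that is upper-triangular on the $[s]\times[s]$ block. The power of $z_{a_j}$ in the resulting monomial equals the row sum $r_j=\sum_l n_{jl}$, plus a shift from the $B$-factors. The crucial structural point is that $B^{(a_1,\ldots,a_{k-1})}_{a_k,b_k}$ is polynomial in $z_{a_k}/z_i$ only for $i\in[s]\setminus\{a_1,\ldots,a_k\}$, so this $B$-factor can only transport $z$-exponent into $z_{a_k}$ from indices \emph{outside} $\{a_1,\ldots,a_k\}$. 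Consequently the net inflow of $z$-exponent into any initial segment $\{a_1,\ldots,a_r\}$ is non-negative, giving $\sum_{j=1}^r k_{a_j}\geq\sum_{j=1}^r r_j$. Combining this with the elementary partial-sum inequality $\sum_{j=1}^r r_j\geq\sum_{j=1}^r m_j$ for an upper-triangular non-negative matrix with prescribed column sums, together with the sorted hypothesis on $\textbf{m}$, I obtain $\sum_{j=1}^r k_{a_j}\geq m_1^++\cdots+m_r^+$. Since the top $r$ entries of $\textbf{k}$ dominate any $r$-subset sum of the $k_{a_j}$'s from above, this yields $\textbf{k}^+\geq \textbf{m}^+$ in each summand, and hence for the whole expansion.

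For the coefficient formula (with $\beta_i=\beta$ and $\textbf{k}=\textbf{m}=\la$), I extract $[z^\la/w^\la]$ from the same iterated expansion. The dominance bound above is saturated only when every $B$-factor contributes its constant-term monomial (transporting no $z$-exponent) and the $n_{kj}$'s are arranged so that $k_{a_j}=\la_j+(s-j)\beta$; these two conditions together force $(a_1,\ldots,a_t)$ to be a permutation of $[s]$ that stabilizes $\la$. Each surviving summand contributes the product of one-variable coefficients $[z^{\la_j+(s-j)\beta}](z;q)^{-1}_\beta = (q^\beta;q)_{\la_j+(s-j)\beta}/(q;q)_{\la_j+(s-j)\beta}$, evaluated by Lemma~\ref{C:cozqbeta}, together with $q$-powers from the geometric prefactors. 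Summing over the $\prod_i m_i(\la)!$ permutations stabilizing $\la$ and over the choices of $b_j\in\{0,\ldots,\beta-1\}$ produces the factor $\prod_i(q^\beta;q^\beta)_{m_i(\la)}/(1-q^\beta)^{m_i(\la)}$, and collecting all the residual $q$-powers gives the overall prefactor $q^{\beta[l(\la)^2-\sum_i m_i(\la)^2]/2}$ of~(\ref{F:cla}).

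The main obstacle is the combinatorial bookkeeping. In the vanishing part I must verify rigorously the one-sided direction in which each $B$-factor transports $z$-exponent, so that the net-inflow estimate into every initial segment of the iteration order genuinely holds (this is where the sorted hypothesis on $\textbf{m}$ becomes essential, since otherwise individual summands \emph{do} contain monomials violating $\textbf{k}^+\geq\textbf{m}^+$, and one must rely on cross-summand cancellation). In the coefficient formula, the delicate step is accounting for all contributions to the $q$-power---from the geometric prefactors $q^{b_j m_j}$, from the normalizing $q^{b\sum\beta_k+(b+1)\sum\beta_k}$ factors in~(\ref{E:Bab}), and from the coefficients of $(z_{a_k}/w_j;q)^{-1}_\beta$---and combining them into the compact exponent $\beta[l(\la)^2-\sum_i m_i(\la)^2]/2$.
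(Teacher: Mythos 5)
Your proposal is correct and follows essentially the same route as the paper: both rest on recursively applying the splitting formula of Proposition \ref{P:Fbetaqstw1} to peel off the $w$-variables (after reducing to sorted $\textbf{m}$ by the $w$-symmetry of $F_{\boldsymbol{\beta},q}[s;t]$, not by any cross-summand cancellation) and on Lemma \ref{C:cozqbeta} to evaluate the resulting sums over $b$. The only difference is organizational---the paper packages your $\min(s,t)$-fold unrolled expansion as an induction on $t$ for the dominance claim (applied to the factor $C_a=F_{\boldsymbol{\beta},q}[s-1;t-1]$) and on $l(\la)$ for the coefficient, which collapses the global bookkeeping you flag as the main obstacle into summing three one-step dominance inequalities, and a single application of Lemma \ref{C:cozqbeta}, per stage.
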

\begin{remark}
We also have an expression for the coefficient of $z^\la/w^\la$ in $F_{\boldsymbol{\beta},q}[s,t]$.
The expression we found is much more complicated than that for $F_{{\beta},q}[s,t]$. It involves summations and products of sequences.
We thus only give the formula (\ref{F:cla}) for the simplicity of it and also that it is enough for what follows.
\end{remark}
\begin{proof} Observe that $F_{\boldsymbol{\beta},q}[s,t]$ can be written as the product of $F_{\boldsymbol{\beta},q}[s,1]$ and
 $\prod_{i=1}^s\prod_{j=2}^t(z_i/w_j;q)_{\beta_i}^{-1}$. The splitting formula for $F_{\boldsymbol{\beta},q}[s,1]$ (Proposition \ref{P:Fbetaqstw1}) enables us to write
\begin{equation}\label{F:splitFbetaqst}
F_{\boldsymbol{\beta},q}[s;t]=\sum_{a=1}^s\sum_{b=0}^{\beta_a-1}A_{a,b}B_{a,b}C_aD_a,
\end{equation} with $B_{a,b}$ given by (\ref{E:Bab})
and
\begin{align*}
&A_{a,b}=(1-q^bz_a/w_1)^{-1},\\
&C_a=F_{\boldsymbol{\beta},q}(z_1,\dotsc,\widehat{z_a},\dotsc,z_s;w_2,\dotsc,w_t),\\
&D_a=\prod_{j=2}^t(z_a/w_j;q)_{\beta_a}^{-1},
\end{align*}
where-and in the following-the hat sign $\widehat{}$ means omission.

For the first statement, we only need to
prove the case that when $\textbf{m}$ is a partition (thus
$\textbf{m}^+=\textbf{m}$), because $F_{\boldsymbol{\beta},q}[s;t]$ is symmetric in $w_1,\dotsc,w_t$.
We make induction on $t$. It is true when
$t=1$ by Proposition \ref{P:Fbetaqstw1}. For general $t$,
if a term $z^{\textbf{k}}/w^{\textbf{m}}$ appears, there should be
four terms from $A_{a,b},B_{a,b},C_a,D_a$ respectively such that
their product is $z^\textbf{k}/w^\textbf{m}$. Let us say the product
is of the following form:
\begin{align}\label{E:4termsproduct}
\frac{z^\textbf{k}}{w^\textbf{m}}=\frac{z_a^m}{w_1^m}\cdot
\frac{z_a^{b_1+\dotsb+\widehat{b_a}+\dotsb+b_s}}{z_1^{b_1}\dotsb\widehat{z_a^{b_a}}\dotsm
z_s^{b_s}}\cdot \frac{z_1^{c_1}\dotsm\widehat{z_a^{c_a}}\dotsm
z_s^{c_s}}{w_2^{c'_2}\dotsm w_t^{c'_t}} \cdot \frac
{z_a^{d_2\dotsb+d_t}}{w_2^{d_2}\dotsm w_t^{d_t}},
\end{align}
where all those powers-$m$, $b_i$'s, $c_i$'s, $c'_i$'s and $d_i$'s-are non-negative integers. The third term in (\ref{E:4termsproduct})
is from $C_a$. By the induction assumption one has:
\begin{equation} \label{Ine:cc'}
(c_{i_1},\dotsc,\widehat{c_{i_a}},\dotsc,c_{i_s})\geq(c'_{2},\dotsc,c'_{t}),
\end{equation}
where $(i_1,\dotsc,\widehat{i_a},\dotsc,i_s)$ is a rearrangement of $1,\dotsc,\widehat{a},\dotsc,s$ such that
the vector $(c_{i_1},\dotsc,\widehat{c_{i_a}},\dotsc,c_{i_s})$ is in decreasing order,
and we use that $(c'_2,\dotsc,c'_t)^+$ is greater than or equal to $(c'_{2},\dotsc,c'_{t})$ by Definition \ref{D:generalizedDom}.
Now we obtain the following three relations:
\begin{align*}
&(0,c_{i_1},\dotsc,\widehat{c_{i_a}},\dotsc,c_{i_s})\geq(0,c'_{2},\dotsc,c'_{t}),\\
&(m+\sum_{i_a\neq
j=1}^sb_j,-b_{i_1},\dotsc,\widehat{-b_{i_a}},\dotsc,-b_{i_s})\geq
(m,0,\dotsc,0),\\
&(d_2\dotsb+d_t,0,\dotsc,0)\geq(0,d_{2},\dotsc,d_{t}).
\end{align*}
The first relation is from (\ref{Ine:cc'}) and the other two come
from the fact that $b_i$'s and $d_j$'s are
 non-negative. Sum the three inequalities up; we find
 \begin{align}
 (k_a,k_{i_1},\dotsc,\widehat{k_{i_a}},\dotsc,k_{i_s})\geq
 (m_1,m_{2},\dotsc,m_{t}),
 \end{align}
which $\textbf{k}^+>\textbf{m}^+$ follows.

We now turn to the second statement. We also prove it by induction on $l(\la)$.
First, it is true if $l(\la)=1$, by Lemma \ref{C:cozqbeta} . We assume that the theorem is true when $l(\la)=s-1$. For the case that $l(\la)=s$, let us first write
$\lambda=(a_1^{n_1}a_2^{n_2}\cdots)$; i.e.,the parts of $\la$ are $n_1$ $a_1$'s ($a_1=\lambda_1$), $n_2$ $a_2$'s, etc., with $a_1>a_2>\dotsb$. Then we have
\begin{equation}
z^\la/w^\la=\frac{z_1^{a_1}z_2^{a_1}\cdots z_{n_1}^{a_1}z_{n_1+1}^{a_2}z_{n_1+2}^{a_2}\cdots z_{n_1+n_2}^{a_2}\cdots}
{w_1^{a_1}w_2^{a_1}\cdots w_{n_1}^{a_1}w_{n_1+1}^{a_2}w_{n_1+2}^{a_2}\cdots w_{n_1+n_2}^{a_2}\cdots}.
\end{equation}
Again, we look at the splitting formula (\ref{F:splitFbetaqst}), but with all $\beta_i=\beta$. To obtain $z^\la/w^\la$, one first takes $(q^bz_a/w_1)^{a_1}$
from $A_{a,b}$ here $a$ is from $\{1,2,\dotsc,n_1\}$. Once this term is taken, one can only take the constant term
which equals $q^{b(a-1)\beta+(b+1)(s-a)\beta}/[(q^{-b};q)_b(q;q)_{\beta-b-1}]$ from $B_{a,b}$ and $1$ from $D_a$; otherwise, the power of $z_a$ will be greater than $a_1$. The other powers of $z_i$'s ($i\neq a$) and $w_j$'s ($j\neq1$) are from $C_a$. Thus we get the following iterative formula:
\begin{align}\label{F:induction4clambda}
c_\la=&\sum_{a=1}^{n_1}\sum_{b=0}^{\beta-1}q^{b\la_1}\frac{q^{b(a-1)\beta+(b+1)(s-a)\beta}}{(q^{-b};q)_b(q;q)_{\beta-b-1}}c_{\la^{-}},
\end{align}
where $\la^-=(\la_2,\la_3,\dotsc,\la_s)=(a_1^{n_1-1}a_2^{n_2}a_3^{n_3}\cdots)$, and $c_{\la^-}$ is the coefficient of the following term in $F_{\beta,q}(z_1,\dotsc,\widehat{z_a},\dotsc,z_s;w_2,\dotsc,w_t)$
$$\frac{z_1^{a_1}z_2^{a_1}\cdots \widehat{z_{a}^{a_1}}\cdots z_{n_1}^{a_1}z_{n_1+1}^{a_2}z_{n_1+2}^{a_2}\cdots z_{n_1+n_2}^{a_2}\cdots}
{w_2^{a_1}\cdots w_{n_1}^{a_1}w_{n_1+1}^{a_2}w_{n_1+2}^{a_2}\cdots w_{n_1+n_2}^{a_2}\cdots}.$$
Note that this coefficient $c_{\la^-}$ remains to be the same as $a$ ranges in the set $\{1,2,\dotsc,n_1\}$, and it is obviously independent of $b$.
 Thus one can factor it out of the double summation in (\ref{F:induction4clambda}). Then one uses Lemma \ref{C:cozqbeta} to find
\begin{align}
c_\la
     &=c_{\la^{-}}\frac{(q^\beta;q)_{\la_1+(s-1)\beta}}{(q;q)_{\la_1+(s-1)\beta}}q^{(s-n_1)\beta}\frac{1-q^{n_1\beta}}{1-q^\beta}.
\end{align}
The $c_{\la^{-}}$ has the following expression by the induction assumption:
\begin{align*}
c_{\la^{-}}=&q^{\beta ((s-1)^2-\sum_im_i(\la)^2+n_1^2-(n_1-1)^2)/2}\frac{(q^\beta;q^{\beta})_{n_1-1}}{(1-q^\beta)^{n_1-1}}\\
&\times\prod_{\la_1\neq i\geq1}\frac{(q^\beta;q^{\beta})_{m_i(\la)}}{(1-q^\beta)^{m_i(\la)}}
\prod_{i=2}^{s}\frac{(q^\beta;q)_{\la_i+(s-i)\beta}}{(q;q)_{\la_i+(s-i)\beta}}.
\end{align*}
Note that $l(\la)=l(\la^{-})+1=s, n_1=m_{\la_1}(\la)$. We then write $c_\la$ into the wanted form (\ref{F:cla}).
\end{proof}

\section{Almost-rectangular Macdonald functions and Jack functions}
Now we return to the study of Macdonald functions of almost rectangular shapes using the vertex operators and the orthogonality relation in the previous sections. Recall that a partition $\la$ is of almost rectangular shape if $\la$ equals $((k+1)^t,k^s)$ for a pair of positive integers $k,s$ and a non-negative integer $t$, and this $\la$ is of rectangular shape when $t=0$. These partitions have the following property on dominance order.
\begin{lemma}\label{L:almostrec} If $\mu$ is of the same weight as
the almost rectangular partition $\rho=((k+1)^t,k^s)$, then
$\mu\geq\rho$ if and only if $l(\mu)\leq s+t$.
\end{lemma}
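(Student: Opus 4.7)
The proof plan is to verify the two implications directly from the definition of dominance ordering, using the explicit form of $\rho=((k{+}1)^t,k^s)$ and the obvious identity $|\rho|=k(s+t)+t$.

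For the forward direction, assuming $\mu\geq\rho$, I would apply the dominance condition at index $i=s+t=l(\rho)$. This gives
\[
\sum_{j=1}^{s+t}\mu_j \;\geq\; \sum_{j=1}^{s+t}\rho_j \;=\; |\rho| \;=\; |\mu|,
\]
so $\mu_{s+t+1}+\mu_{s+t+2}+\dotsb\leq 0$, forcing all these parts to be $0$ and hence $l(\mu)\leq s+t$.

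For the backward direction, assume $l(\mu)\leq s+t$ and $|\mu|=|\rho|$. For $i\geq s+t$ both partial sums equal $|\mu|$, so the nontrivial range is $1\leq i\leq s+t-1$. I would argue by contradiction: suppose $\sum_{j=1}^i\mu_j<\sum_{j=1}^i\rho_j$ for some such $i$. Since the first $i$ parts of $\mu$ are weakly decreasing, $i\mu_i\leq\sum_{j=1}^i\mu_j$, and a short calculation (splitting into the cases $i\leq t$ where $\sum_{j=1}^i\rho_j=(k+1)i$, and $t<i<s+t$ where $\sum_{j=1}^i\rho_j=(k+1)t+k(i-t)=ki+t$) yields in both cases $\mu_i\leq k$. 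Consequently every $\mu_j$ with $j>i$ is at most $k$, so
\[
|\mu|\;=\;\sum_{j=1}^i\mu_j+\sum_{j=i+1}^{s+t}\mu_j\;<\;\Bigl(\sum_{j=1}^i\rho_j\Bigr)+k(s+t-i)\;=\;k(s+t)+t\;=\;|\rho|,
\]
contradicting $|\mu|=|\rho|$.

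The only place that requires a moment's care is confirming $\mu_i\leq k$ in the range $t<i<s+t$, where the bound $i\mu_i<ki+t$ gives $\mu_i<k+t/i<k+1$; this is the main (mild) obstacle, and it dissolves once one observes that $t/i<1$ in that range. Everything else is bookkeeping with the definition of dominance order.
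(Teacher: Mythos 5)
Your argument is correct and, unlike the paper, is fully self-contained: the paper disposes of this lemma by remarking that the ``only if'' direction is immediate and by citing Lemma~3.7 of [CJ1] for the ``if'' direction, so there is no in-text proof to compare against. Your forward direction (evaluate the dominance condition at $i=s+t$ to force $\sum_{j>s+t}\mu_j=0$) is exactly the ``simple by definition'' step the paper alludes to. Your backward direction --- contradiction via $i\mu_i\le\sum_{j\le i}\mu_j$, the two-case computation of $\sum_{j\le i}\rho_j$ giving $\mu_i\le k$, and then the weight count $|\mu|<\sum_{j\le i}\rho_j+k(s+t-i)\le|\rho|$ --- is sound, including the key observation that $t/i<1$ for $t<i<s+t$ and the use of $l(\mu)\le s+t$ to truncate the tail sum at $s+t$. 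One cosmetic slip: in your displayed chain the final equality $\sum_{j=1}^i\rho_j+k(s+t-i)=k(s+t)+t$ holds only in the case $t<i<s+t$; for $i\le t$ the left side equals $k(s+t)+i$, so the ``$=$'' should be ``$\le$''. Since $i\le t$ there, the contradiction $|\mu|<|\rho|$ survives, so nothing breaks. The net effect is that your write-up replaces an external citation with a short elementary proof, which is arguably an improvement for a reader who does not have [CJ1] at hand.
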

This property can be seen from the Young diagram of $\rho$.
As to the formal proof, the ``only if'' part is simple by definition and the ``if'' part is Lemma 3.7 in \cite{CJ1}.

Now we are ready to give the vertex operator realization of
Macdonald functions of almost rectangular shapes. Let us first recall that we define the operators $X_i$'s on $V$ at the beginning of Section 2 and the lowering operator $D_i$'s in Definition \ref{D:Di}. Let $\la=(\la_1,\la_2,\dotsc,\la_n)$ be a partition of length $n$; we use notation $X_{-\la}$ for the operator product $X_{-\la_1}X_{-\la_s}\dotsm X_{-\la_n}$, and also recall that $g_\la=Q_{\la_1}Q_{\la_2}\cdots$.
\begin{theorem}\label{T:main}
Let an almost rectangular partition $\rho$ be $((k+1)^t,k^s)$, with $k,s>0$
and $t\geq0$, and $\beta$ be a positive integer. We have
\begin{align}\label{F:VerRecMac}
&X_{-\rho}.1\otimes
e^{-(s+t)\eta/2}=\epsilon_{q}(\beta,s+t)C_\rho Q_{\rho}(q,q^\beta)\otimes e^{(s+t)\eta/2},\\
\label{F:qDysonMac}
&C_\rho Q_{\rho}(q,q^\beta)=\prod_{1\leq i<j\leq s+t}\Big(\frac{D_i}{D_j};q\Big)_\beta
\Big(\frac{qD_j}{D_i};q\Big)_\beta.g_\rho(q,q^\beta),
\end{align}
where \begin{align*}
&\epsilon_q(\beta,n)=(-1)^{\beta
n(n-1)/2}q^{-\beta(\beta+1)n(n-1)/4},\\
&C_\rho =q^{st\beta}
\frac{(q;q^\beta)_s(q;q^\beta)_t(q;q)_{(s+t)\beta}}{(q;q^\beta)_{s+t}(q;q)_\beta^{s+t}}.
\end{align*}
\end{theorem}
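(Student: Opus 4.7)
The plan is to prove (\ref{F:VerRecMac}) directly from the characterization of $Q_\rho$ in Lemma \ref{D:Qlambda}; then (\ref{F:qDysonMac}) follows immediately by combining (\ref{F:VerRecMac}) with formula (\ref{F:Xlambdaraising}), since $F_{\beta,q}(D_1,\dotsc,D_{s+t})$ is precisely the operator $\prod_{1\leq i<j\leq s+t}(D_i/D_j;q)_\beta(qD_j/D_i;q)_\beta$ appearing on the right-hand side of (\ref{F:qDysonMac}).

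For (\ref{F:VerRecMac}), I first show that $X_{-\rho}.1\otimes e^{-(s+t)\eta/2}$ lies in the span of $\{g_\mu(q,q^\beta)\otimes e^{(s+t)\eta/2}:\mu\geq\rho,\ |\mu|=|\rho|\}$. By (\ref{F:Xlambdaraising}) and Definition \ref{D:Di}, it is a combination of terms $Q_{\rho_1-i_1}\dotsm Q_{\rho_{s+t}-i_{s+t}}\otimes e^{(s+t)\eta/2}$ with $\sum_k i_k=0$ (since $F_{\beta,q}$ is homogeneous of total degree zero in the $z_i$'s), so after rearranging indices it lands in $g_\nu$'s with $|\nu|=|\rho|$ and $l(\nu)\leq s+t$; Lemma \ref{L:almostrec} then forces $\nu\geq\rho$. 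Next, I verify the orthogonality: by (\ref{F:XYC.T.}),
\[
\langle X_{-\rho}.1\otimes e^{-(s+t)\eta/2},g_\nu\otimes e^{(s+t)\eta/2}\rangle=\epsilon_q(\beta,s+t)\operatorname{C.T.}(z^\rho/w^\nu)^{-1}F_{\beta,q}[s+t;l(\nu)],
\]
which is $\epsilon_q(\beta,s+t)$ times the coefficient of $z^\rho/w^\nu$ in $F_{\beta,q}[s+t;l(\nu)]$; by the first half of Theorem \ref{T:Fbetaqst} this vanishes unless $\rho\geq\nu$, hence is zero whenever $\nu>\rho$. A standard triangularity argument---expanding $X_{-\rho}.1\otimes e^{-(s+t)\eta/2}=\sum_{\mu\geq\rho}b_\mu Q_\mu\otimes e^{(s+t)\eta/2}$, picking a maximal $\mu^*>\rho$ with $b_{\mu^*}\neq 0$, and deriving $b_{\mu^*}\|Q_{\mu^*}\|^2=0$ from the orthogonality at $\nu=\mu^*$---then shows this vector is a scalar multiple of $Q_\rho(q,q^\beta)\otimes e^{(s+t)\eta/2}$.

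To identify the scalar as $\epsilon_q(\beta,s+t)C_\rho$, I compute $\langle X_{-\rho}.1\otimes e^{-(s+t)\eta/2},g_\rho\otimes e^{(s+t)\eta/2}\rangle$ in two ways: via (\ref{F:XYC.T.}) and the explicit formula (\ref{F:cla}) for $c_\rho$ in Theorem \ref{T:Fbetaqst} on one hand, and via the scalar times $\langle Q_\rho,g_\rho\rangle=\langle Q_\rho,Q_\rho\rangle$ on the other (the second equality uses $Q$-basis orthogonality on $g_\rho=Q_\rho+\sum_{\mu>\rho}(\cdots)Q_\mu$), with $\langle Q_\rho,Q_\rho\rangle$ given by Lemma \ref{L:Macnorm}. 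The main obstacle is the resulting $q$-Pochhammer identity $c_\rho=C_\rho\langle Q_\rho,Q_\rho\rangle$: one substitutes $\rho=((k+1)^t,k^s)$ (so that $\rho'=((s+t)^k,t^1)$ has a simple two-block shape), simplifies the double product in Lemma \ref{L:Macnorm} into a telescoping product of $q$-shifted factorials indexed by the rows of $\rho$, and matches the outcome against the explicit forms of $c_\rho$ and $C_\rho$. This manipulation relies on the specific almost-rectangular structure of $\rho$ and is the chief technical content of the proof.
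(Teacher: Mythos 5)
Your proposal is correct and follows essentially the same route as the paper: use (\ref{F:Xlambdaraising}) with Lemma \ref{L:almostrec} for the triangularity, (\ref{F:XYC.T.}) with the first part of Theorem \ref{T:Fbetaqst} for the orthogonality (so that Lemma \ref{D:Qlambda} pins the vector down as a multiple of $Q_\rho$), and then pair with $g_\rho$ using (\ref{F:cla}) and Lemma \ref{L:Macnorm} to extract $C_\rho$. The only cosmetic difference is that you spell out the triangularity/maximal-term argument that the paper compresses into a citation of Lemma \ref{D:Qlambda}.
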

\begin{proof}
Up to the multiplication of $e^{nh}$, $X_{-\rho}.1\otimes
e^{-(s+t)h/2}$ is a linear combination of $g_\mu(q,q^\beta)$'s with $\mu\geq\rho$ by formula (\ref{F:Xlambdaraising}) and Lemma \ref{L:almostrec}.
Furthermore, its scalar product with $g_\mu(q,q^\beta)$ for those $\mu>\rho$ is zero by formula (\ref{F:XYC.T.}) and Theorem \ref{T:Fbetaqst}.
By Lemma \ref{D:Qlambda}, we know that (\ref{F:VerRecMac}) is true for some constant $C_\rho$. Then (\ref{F:qDysonMac}) follows by formula (\ref{F:Xlambdaraising}).
What remains to be done is to fix the constant $C_\rho$.
Applying Lemma \ref{L:Macnorm} to $\rho=((k+1)^t,k^s)$, one has
\begin{align}\label{F:rectnorm}
&\langle Q_{\rho}(q,q^\beta),Q_{\rho}(q,q^\beta)\rangle=\\\nonumber
&\frac{(q^{s\beta};q)_k(q^\beta;q^\beta)_{s-1}}{(q;q)_{k-1}(q^k;q^\beta)_s}
\frac{(q^{1+(s+t)\beta};q)_k(q^{1+(s+1)\beta};q^\beta)_{t-1}}
{(q^{2+s\beta};q)_{k-1}(q^{k+1+s\beta};q^\beta)_t}
\frac{(q^\beta;q^\beta)_t}{(q;q^\beta)_t}.
\end{align}
Combining formula (\ref{F:XYC.T.}) and formula (\ref{F:cla}) in Theorem \ref{T:Fbetaqst}, we find
\begin{align*}
&\langle (X_{-\rho}.1\otimes e^{-(s+t)h/2}, g_{\rho}\otimes e^{(s+t)h/2}\rangle\\
&=\epsilon_q(\beta,s+t)
q^{st\beta}\frac{(q^\beta;q^\beta)_s(q^\beta;q^\beta)_t}{(1-q^\beta)^{s+t}}
\prod_{i=s}^{s+t-1}\frac{(q^\beta;q)_{k+1+i\beta}}{(q;q)_{k+1+i\beta}}\prod_{i=0}^{s-1}\frac{(q^\beta;q)_{k+i\beta}}{(q;q)_{k+i\beta}}\\
&=\epsilon_q(\beta,s+t)\frac{q^{st\beta}(q^{2\beta};q^\beta)_{s-1}(q^{2\beta};q^\beta)_{t-1}(q^\beta;q)_{k+1+(s+t-1)\beta}(q^\beta;q)_{k+(s-1)\beta}}
{(q;q)_\beta^{s+t-2}(q;q)_{k+1+s\beta}(q;q)_k(q^{k+1+(s+1)\beta};q^\beta)_{t-1}(q^{k+\beta};q^\beta)_{s-1}}.
\end{align*}
Notice that one has
\begin{align*}
\langle (X_{-\rho}.1\otimes e^{-(s+t)h/2}, g_{\rho}\otimes e^{(s+t)h/2}\rangle
=\epsilon_q(\beta,s+t)C_\rho\langle Q_\rho,Q_\rho\rangle,
\end{align*}
by Lemma \ref{D:Qlambda}. Combining these formulae, one finds an expression for $C_\rho$ which can be written into the wanted form.
\end{proof}

\subsection{Jack function of almost rectangular shapes} Let us study the special case of Jack functions.
We will show that our theorem on Macdonald functions implies the hyperdeterminant
formula or Jacobi-Trudi formula for Jack functions of almost rectangular shapes \cite{BBL}.
Recall that the Macdonald function $Q_\la(q,q^\beta)$ goes to Jack function $Q_\la(\beta^{-1})$ when $q$ goes to $1$. Taking the limit $q\rightarrow 1$ in Theorem \ref{T:main},
we get the vertex operator
realization of Jack function $Q_\rho(\beta^{-1})$ of almost rectangular shapes and also the following lowering operator formula (\ref{F:DysonAlmostRecJac}) for them.
This specification covers those corresponding results in \cite{CJ1} and \cite{CJ3}, where the arguments
work for $\rho=((k+1)^t,k^s)$ with $s$ restricted to $0$ and $1$. Moreover, one can see in the following (from the proof) that for Jack function of almost rectangular shapes,
 the lowering operator (\ref{F:DysonAlmostRecJac}) and the hyperdeterminant formula (\ref{F:hyperdeterminant}) are equivalent.
 However, we do not know the q-analog of (\ref{F:hyperdeterminant}), i.e., a formula for almost rectangular Macdonald functions which is of the form as (\ref{F:hyperdeterminant})
  and is equivalent to the lowering operator formula (\ref{F:qDysonMac}).

\begin{corollary}\cite{BBL}\label{C:AlReJack}
For an almost rectangular partition $\rho=((k+1)^t,k^s)$ and a positive integer $\beta$,
one has the following hyperdeterminant formula for the Jack function
$Q_{\rho}(\beta^{-1})$:
\begin{align}\label{F:hyperdeterminant}
&\frac{(\beta^{-1})_s(\beta^{-1})_t}{(\beta^{-1})_{s+t}}\frac{((s+t)\beta)!}{(\beta!)^{s+t}}Q_{\rho}(\beta^{-1})=\\ \nonumber
&\sum \prod_{i=1}^{2\beta}\operatorname{sgn}(\sigma_i)\prod_{i=1}^t
Q_{k+1+\sum_{j=1}^\beta\sigma_{\beta+j}(i)-\sigma_j(i)}(\beta^{-1})\prod_{i=1}^s
Q_{k+\sum_{j=1}^\beta\sigma_{\beta+j}(i)-\sigma_j(i)}(\beta^{-1}),
\end{align}
where the sum runs over
$(\sigma_1,\sigma_2\cdots,\sigma_{2\beta})\in
(\mathfrak{S}_{s+t})^{2\beta}$ with $\mathfrak{S}_{n}$ being the symmetric group of degree n, and the Pochhammer symbol
$(x)_n$ is defined to be $x(x+1)\cdots(x+n-1)$.

\end{corollary}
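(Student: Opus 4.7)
The plan is to derive this corollary by taking the $q\to 1$ limit of the lowering-operator formula (\ref{F:qDysonMac}) of Theorem \ref{T:main}, applied to $\rho=((k+1)^t,k^s)$ with $n=s+t$, and then recognizing the right-hand side as a Vandermonde (hyperdeterminantal) expansion acting on $g_\rho(\beta^{-1})$.

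On the left-hand side, $Q_\rho(q,q^\beta)\to Q_\rho(\beta^{-1})$ by the paper's stated limit. For the constant $C_\rho$, I would use $(q^a;q)_m/(1-q)^m\to (a)_m$, which specializes to $(q;q^\beta)_m/(1-q)^m\to\beta^m(\beta^{-1})_m$ and $(q;q)_m/(1-q)^m\to m!$; the numerator and denominator of $C_\rho$ each carry $(1-q)^{(s+t)(\beta+1)}$, so the ratio has the finite limit
\[
\lim_{q\to 1}C_\rho=\frac{(\beta^{-1})_s(\beta^{-1})_t((s+t)\beta)!}{(\beta^{-1})_{s+t}(\beta!)^{s+t}},
\]
precisely the scalar appearing on the left of (\ref{F:hyperdeterminant}).

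For the right-hand side, $(D_i/D_j;q)_\beta(qD_j/D_i;q)_\beta\to(1-D_i/D_j)^\beta(1-D_j/D_i)^\beta$ as $q\to 1$. Using the elementary identity $(1-D_i/D_j)(1-D_j/D_i)=-(D_i-D_j)^2/(D_iD_j)$ and $\prod_{i<j}D_iD_j=\prod_i D_i^{n-1}$, the operator becomes
\[
(-1)^{\beta\binom{n}{2}}\Big(\prod_i D_i^{-(n-1)\beta}\Big)\Big(\prod_{i<j}(D_i-D_j)\Big)^{2\beta}.
\]
I would then split the $2\beta$-th Vandermonde power into two $\beta$-fold products, expanding one copy as $\prod_{i<j}(D_i-D_j)=\sum_\tau\operatorname{sgn}(\tau)\prod_j D_j^{n-\tau(j)}$ and the other, via $\prod_{i<j}(D_i-D_j)=(-1)^{\binom{n}{2}}\prod_{i<j}(D_j-D_i)$, as $(-1)^{\binom{n}{2}}\sum_\sigma\operatorname{sgn}(\sigma)\prod_j D_j^{\sigma(j)-1}$. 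The two powers of $(-1)^{\beta\binom{n}{2}}$ that appear (from the $-(D_i-D_j)^2/(D_iD_j)$ identity and from this sign conversion) cancel exactly, and after multiplying and absorbing the $\prod_i D_i^{-(n-1)\beta}$ offset, the operator reduces to
\[
\sum_{\tau_1,\dotsc,\tau_\beta,\sigma_1,\dotsc,\sigma_\beta\in\mathfrak{S}_n}\Big(\prod_l\operatorname{sgn}(\tau_l)\operatorname{sgn}(\sigma_l)\Big)\prod_{j=1}^n D_j^{\sum_{l=1}^\beta[\sigma_l(j)-\tau_l(j)]}.
\]

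Applying this to $g_\rho(\beta^{-1})=Q_{k+1}(\beta^{-1})^t\,Q_k(\beta^{-1})^s$ and using that $D_j^e$ lowers the $j$-th subscript by $e$, the shift on $\rho_j$ is $\sum_{l=1}^\beta[\tau_l(j)-\sigma_l(j)]$. Renaming the dummy permutations by setting $\sigma_l$ as the corollary's $\sigma_l$ and $\tau_l$ as the corollary's $\sigma_{\beta+l}$ for $l=1,\dotsc,\beta$, this becomes $\sum_{l=1}^\beta[\sigma_{\beta+l}(j)-\sigma_l(j)]$. Indexing positions $j=1,\dotsc,t$ (where $\rho_j=k+1$) and $j=t+1,\dotsc,s+t$ (where $\rho_j=k$, reindexed by $i=1,\dotsc,s$ as the corollary does) recovers (\ref{F:hyperdeterminant}) term by term. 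The main obstacle I anticipate is the sign and offset bookkeeping: carefully tracking the two $(-1)^{\beta\binom{n}{2}}$ factors so they cancel, and matching the $\beta(n-1)$ offset from the paired Vandermonde exponents against the $\prod_i D_i^{-(n-1)\beta}$ factor so that only the antisymmetric combination $\sigma_{\beta+l}(j)-\sigma_l(j)$ survives in the $Q$-subscripts.
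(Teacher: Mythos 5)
Your proposal is correct and follows essentially the same route as the paper: take the $q\to1$ limit of (\ref{F:qDysonMac}) and rewrite the resulting operator $\prod_{i<j}(1-D_i/D_j)^\beta(1-D_j/D_i)^\beta$ as a product of Vandermonde determinants, then expand into alternating sums of monomials in the $D_i$'s. The only cosmetic difference is that the paper uses the identity $(1-D_i/D_j)(1-D_j/D_i)=(D_j-D_i)(D_j^{-1}-D_i^{-1})$, which yields one Vandermonde in the $D_i$ and one in the $D_i^{-1}$ with no residual sign or exponent offset to track, whereas your $-(D_i-D_j)^2/(D_iD_j)$ version requires the bookkeeping you describe (which does work out); your explicit verification of $\lim_{q\to1}C_\rho$ is a detail the paper leaves implicit.
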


\begin{remark} The rectangular case-the case that $t=0$-was given in \cite{Ma}.
The case that $\beta=1$ gives the Jacob-Trudi formula for almost rectangular Schur functions.
\end{remark}

\begin{proof}
Take the limit $q\rightarrow 1$ in (\ref{F:qDysonMac}). One has
\begin{align}\label{F:DysonAlmostRecJac}
&\frac{(\beta^{-1})_s(\beta^{-1})_t}{(\beta^{-1})_{s+t}}\frac{((s+t)\beta)!}{(\beta!)^{s+t}}Q_{((k+1)^t,k^s)}(\beta^{-1})\\\nonumber
&=\prod_{1\leq i<j\leq s+t}\Big(1-\frac{D_i}{D_j})^\beta
\Big(1-\frac{D_j}{D_i}\Big)^\beta.
\big(Q_{k+1}(\beta^{-1})\big)^{t}\big(Q_{k}(\beta^{-1})\big)^{s}.
\end{align}
Observe that $(1-D_i/D_j)(1-D_j/D_i)=(D_j-D_i)(D_j^{-1}-D_i^{-1})$. We
can re-write the operator in the right side of (\ref{F:DysonAlmostRecJac}) as a
product of Vandermonde determinants. We then expand these determinants into
an alternating sum of monomials. Hence we have the following:
\begin{align*}
&\prod_{1\leq i<j\leq s+t}\Big(1-\frac{D_i}{D_j})^\beta
\Big(1-\frac{D_j}{D_i}\Big)^\beta\\
&=\Big(\sum_{\sigma\in\mathfrak{S}_{s+t}}\operatorname{sgn}(\sigma)\prod_{i=1}^{s+t}D_i^{\sigma(i)-1}\Big)^\beta
\Big(\sum_{\sigma\in\mathfrak{S}_{s+t}}\operatorname{sgn}(\sigma)\prod_{i=1}^{s+t}D_i^{-\sigma(i)+1}\Big)^\beta\\
&=\sum \prod_{i=1}^{2\beta}\operatorname{sgn}(\sigma_i)
\prod_{i=1}^{s+t}D_i^{\sum_{j=1}^\beta\sigma_j(i)-\sigma_{j+\beta}(i)},
\end{align*}
where the summation runs over all $(\sigma_1,\sigma_2,\dotsc,\sigma_{2\beta})\in \mathfrak{S}_{s+t}^{2\beta}$.
Now (\ref{F:hyperdeterminant}) follows by Definition \ref{D:Di} of the lowering operator $D_i$.
\end{proof}

\section{A Frobenious formula for Macdonald functions}
In this section, we first use the technique of vertex operator to find a combinatorial formula for rectangular Macdonald functions.
then we iteratively construct all Macdonald functions with those rectangular ones.
As a result, we find a combinatorial formula expressing Macdonald functions as a linear combination of power sums.
We also give an example to show that how can one express a Macdonald functions as the coefficient of a monomial in a Laurent polynomial.

For convenience, let us write our vertex operator as the following form:
\begin{align}
X(z)=\exp\Big(\sum_{n\geq1}\frac{h_{-n}z^n}{n\epsilon_n}\Big)
M(z)\exp\Big(\sum_{n\geq1}\frac{h_{n}z^{-n}\tau_n}{-n}\Big)
=\sum_nX_{-n}z^n,
\end{align}
where
\begin{align}\label{N:epsilonntaun}
\epsilon_n=\frac{1-q^n}{1-q^{n\beta}} \; \text{ ~~and~~} \;\tau_n=\frac{q^{-n\beta}-q^{n\beta}}{1-q^n}.
\end{align}
Now we define the following notations for a partition $\la$:
\begin{align}\label{N:epsilonlataula}
\epsilon_\la=\epsilon_{\la_1}\epsilon_{\la_2}\cdots \; \text{~~and~~~}\;
  \tau_\la=\tau_{\la_1}\tau_{\la_2}\cdots,
\end{align}
where we set $\epsilon_0=\tau_0=1$. The following definitions will also be used in the combinatorial formula that we are going to give.
\begin{definition}\label{D:partitionnotations}
Let $\la,\mu$ be two partitions, if $m_i(\la)\geq m_i(\mu)$ for all $i$, we 
write $\mu\subset'\la$. In this case, we define the set difference $\la\backslash\mu$ by
 $m_i(\la\backslash\mu)=m_i(\la)-m_i(\mu)$, and the multiplicity binomial
 $\binom{m(\la)}{m(\mu)}$ is defined to be $\prod_i\binom{m_i(\la)}{m_i(\mu)}$.
\end{definition}
Using these notations, we first have the following formulae by computations:
\begin{align}\label{F:generating}
&\exp\Big(\sum_{n\geq1}\frac{h_{-n}z^n}{n\epsilon_n}\big).v\otimes e^{m\eta}
=\sum_{\nu\in\mathcal{P}}z_\nu^{-1}\epsilon_\nu^{-1}p_{\nu}z^{|\nu|}v\otimes e^{m\eta},\\\label{F:Annihilating}
&\exp\Big(\sum_{n\geq1}\frac{h_{n}z^{-n}\tau_n}{-n}\big).p_{\lambda}\otimes e^{m\eta}
=\sum_{\mu\subset'\lambda}\binom{m(\lambda)}{m(\mu)}(-1)^{l(\mu)}\tau_\mu\epsilon_\mu
z^{-|\mu|}p_{\lambda\backslash\mu}\otimes e^{m\eta}.
\end{align}
Then we use these formulae to compute the successive actions of the vertex operator. We found a combinatorial formula as in the following.
\begin{proposition}\label{P:XlaAsPowersum}
For a partition $\la=(\la_1,\dotsc,\la_s)$ of length $s$, a positive integer $\beta$ and $n\in\frac12\mathbb{Z}$, we have
\begin{align}\label{F:CombXlambda}
&X_{-\la_s}\cdots X_{-\la_1}.1\otimes e^{n\eta}\\\nonumber
&=\sum_{\underline{\mu},\underline{\nu}}\prod_{i=1}^s\frac{(-1)^{l(\nu^i)}\tau_{\nu^i}}{z_{\nu^i}}\binom{m(\mu^{i-1})}{m(\mu^i\backslash\nu^i)}
\frac{(-1)^{l(\mu^s)}p_{\mu^s}}{\tau_{\mu^s}\epsilon_{\mu^s}}\otimes e^{(n+s)\eta},
\end{align}
where the sum is over pairs of partition sequences $\underline{\mu}=(\mu^0,\mu^1,\dotsc,\mu^s), \underline{\nu}=(\nu^1,\nu^2,\dotsc,\nu^s)$, such that
$|\mu^i|=\la_1+\dotsb+\la_i-i(i+2n)\beta$ (thus $\mu^0=(0)$), $\nu^i\subset'\mu^i$, and $\mu^i\backslash\nu^i\subset'\mu^{i-1}$.
\end{proposition}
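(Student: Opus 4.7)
The plan is to prove (\ref{F:CombXlambda}) by induction on $s$, noting that $X_{-\la_i}$ is the coefficient of $z_i^{\la_i}$ in $X(z_i)$. At each step I apply $X(z_i)$ to the state produced by the previous step, using its three factored pieces. By (\ref{F:Annihilating}), the annihilation part turns $p_{\mu^{i-1}}\otimes e^{(n+i-1)\eta}$ into a sum over $\kappa^i\subset'\mu^{i-1}$ with coefficient $\binom{m(\mu^{i-1})}{m(\kappa^i)}(-1)^{l(\kappa^i)}\tau_{\kappa^i}\epsilon_{\kappa^i}z_i^{-|\kappa^i|}$ times $p_{\mu^{i-1}\backslash\kappa^i}$; the middle $M(z_i)$ contributes $z_i^{(2n+2i-1)\beta}$ and sends $e^{(n+i-1)\eta}$ to $e^{(n+i)\eta}$; and by (\ref{F:generating}), the creation multiplies by $\sum_{\nu^i}z_{\nu^i}^{-1}\epsilon_{\nu^i}^{-1}z_i^{|\nu^i|}p_{\nu^i}$. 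The base case $s=1$ follows directly since the annihilation acts trivially on the vacuum.

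For the inductive step, I identify $\mu^i:=\nu^i\cup(\mu^{i-1}\backslash\kappa^i)$, equivalently $\kappa^i=\mu^{i-1}\backslash(\mu^i\backslash\nu^i)$. This choice makes the two conditions $\nu^i\subset'\mu^i$ and $\mu^i\backslash\nu^i\subset'\mu^{i-1}$ automatic, and matching the power of $z_i$ with $\la_i$ forces $|\mu^i|=|\mu^{i-1}|+\la_i-(2n+2i-1)\beta$, which telescopes to $|\mu^i|=\la_1+\cdots+\la_i-i(i+2n)\beta$ as stated. Binomial symmetry converts $\binom{m(\mu^{i-1})}{m(\kappa^i)}$ into $\binom{m(\mu^{i-1})}{m(\mu^i\backslash\nu^i)}$, the multiplicity binomial appearing in the proposition, and $p_{\nu^i}\cdot p_{\mu^{i-1}\backslash\kappa^i}$ combines into $p_{\mu^i}$.

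The main obstacle is reconciling the $\kappa^i$-dependent factors $(-1)^{l(\kappa^i)}\tau_{\kappa^i}\epsilon_{\kappa^i}$ produced naturally by the calculation with the $\nu^i$- and $\mu^s$-dependent factors in the target formula. I will handle this by a telescoping argument based on the decompositions $\mu^{i-1}=\kappa^i\cup(\mu^i\backslash\nu^i)$ and $\mu^i=\nu^i\cup(\mu^i\backslash\nu^i)$, which yield
\begin{equation*}
\tau_{\kappa^i}\epsilon_{\kappa^i}=\frac{\tau_{\nu^i}\epsilon_{\nu^i}\,\tau_{\mu^{i-1}}\epsilon_{\mu^{i-1}}}{\tau_{\mu^i}\epsilon_{\mu^i}},\qquad l(\kappa^i)=l(\nu^i)+l(\mu^{i-1})-l(\mu^i).
\end{equation*}
Multiplying over $i=1,\ldots,s$ and using $\mu^0=(0)$, the $\mu^i$-parts telescope to give $\prod_i\tau_{\kappa^i}\epsilon_{\kappa^i}=\big(\prod_i\tau_{\nu^i}\epsilon_{\nu^i}\big)/(\tau_{\mu^s}\epsilon_{\mu^s})$ and $(-1)^{\sum_il(\kappa^i)}=(-1)^{l(\mu^s)}\prod_i(-1)^{l(\nu^i)}$. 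Substituting these into the accumulated coefficient and cancelling the $\epsilon_{\nu^i}$ from the annihilation against the $\epsilon_{\nu^i}^{-1}$ from the creation produces exactly the coefficient in (\ref{F:CombXlambda}).
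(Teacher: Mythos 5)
Your proof is correct and takes essentially the same route as the paper's: induction on $s$, applying the annihilation part via (\ref{F:Annihilating}), then $M(z)$, then the creation part via (\ref{F:generating}), and reindexing by $\kappa^i=\mu^{i-1}\backslash(\mu^i\backslash\nu^i)$ together with the symmetry of the multiplicity binomial. Your telescoping identities for $\tau,\epsilon$ and the lengths merely make explicit the coefficient bookkeeping that the paper leaves implicit in its final step.
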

\begin{proof}
By (\ref{F:generating}), it is easy to see that it is true when $s=1$.
 Assume that (\ref{F:CombXlambda}) is true;
 let us consider $X_{-\la_{s+1}}X_{-\la_s}\cdots X_{-\la_1}.1\otimes e^{n\eta}$. To compute it, we act $X(z)$ to (\ref{F:CombXlambda}) and take the coefficient of $z^{\la_{s+1}}$.
 First, act the annihilation part of $X(z)$ to the right side of (\ref{F:CombXlambda}) using formula (\ref{F:Annihilating}); the result is:
\begin{align}\label{E:Annihilating0}
&\sum_{\mu\subset'\mu^s}\sum_{\underline{\mu},\underline{\nu}}
\prod_{i=1}^s\frac{(-1)^{l(\nu^i)}\tau_{\nu^i}}{z_{\nu^i}}\binom{m(\mu^{i-1})}{m(\mu^i\backslash\nu^i)}\\\nonumber
&\qquad\times\frac{(-1)^{l(\mu^s)}}{\tau_{\mu^s}\epsilon_{\mu^s}}\binom{m(\mu^s)}{m(\mu)}
(-1)^{l(\mu)}\tau_\mu\epsilon_\mu z^{-|\mu|}p_{\mu^s\backslash\mu}\otimes e^{(n+s)\eta}.
\end{align}
Acting the middle term to it,
the result is the same as one multiplies $z^{(2(n+s)+1)\beta}$ to (\ref{E:Annihilating0})
and changes $e^{(n+s)h}$ to $e^{(n+s+1)h}$. Then one apply the creation part of $X(z)$ to this result using (\ref{F:generating}), and take the
 coefficient of $z^{\la_{s+1}}$. We find that
$X_{-\la_{s+1}}X_{-\la_s}\cdots X_{-\la_1}.1\otimes e^{n\eta}$ equals the following:
\begin{align}\label{E:Annihilating}
&\sum_{\nu^{s+1}}\sum_{\mu\subset'\mu^s}\sum_{\underline{\mu},\underline{\nu}}
\prod_{i=1}^s\frac{(-1)^{l(\nu^i)}\tau_{\nu^i}}{z_{\nu^i}}\binom{m(\mu^{i-1})}{m(\mu^i\backslash\nu^i)}\\\nonumber
&\times\frac{(-1)^{l(\mu^s)}}{\tau_{\mu^s}\epsilon_{\mu^s}}\binom{m(\mu^s)}{m(\mu)}
(-1)^{l(\mu)}\tau_\mu\epsilon_\mu z_{\nu^{s+1}}^{-1}\epsilon_{\nu^{s+1}}^{-1}p_{(\mu^s\backslash\mu)\cup\nu^{s+1}}\otimes e^{(n+s+1)\eta}.
\end{align}
where $\nu^{s+1}$ subjects to $|\nu^{s+1}|-|\mu|+(2(n+s)+1)\beta=\la_{s+1}$. Finally, we replace the variable $\mu$ with $\mu=\mu^{s}\backslash(\mu^{s+1}\backslash\nu^{s+1})$. 
 Note that $\binom{m(\mu^s)}{m(\mu)}=\binom{m(\mu^s)}{m(\mu^s\backslash\mu)}$; we can write (\ref{E:Annihilating}) into the wanted form.
\end{proof}

From now on, we consider symmetric functions in $\Lambda_{F}$ (not the $V$ as in the previous sections).
\begin{definition}\label{D:gksmu}
For a rectangular partition $R=(k^s)$, a positive integer $\beta$ and a partition $\mu\vdash ks$, we define
\begin{align}
g_{R,\mu}(\beta,q)&=(-1)^{\beta
s(s-1)/2}q^{\beta(\beta+1)s(s-1)/4}\frac{(q;q)_\beta^{s}}{(q;q)_{s\beta}}\frac{(-1)^{l(\mu)}}{\tau_{\mu}\epsilon_{\mu}}\\\nonumber
&\qquad\times\sum_{\underline{\mu},\underline{\nu}}
\prod_{i=1}^s\frac{(-1)^{l(\nu^i)}\tau_{\nu^i}}{z_{\nu^i}}\binom{m(\mu^{i-1})}{m(\mu^i\backslash\nu^i)},
\end{align}
where the sum is over pairs of partition sequences $\underline{\mu}=(\mu^0,\mu^1,\dotsc,\mu^s),\underline{\nu}=(\nu^1,\nu^2,\dotsc,\nu^s)$, such that $\mu^s=\mu$,
$|\mu^i|=ik+i(s-i)\beta$ (thus $\mu^0=(0)$), $\nu^i\subset'\mu^i$, and $\mu^i\backslash\nu^i\subset'\mu^{i-1}$, the notations $\tau_\la, \epsilon_\la$ are defined in (\ref{N:epsilonntaun}) and (\ref{N:epsilonlataula}), and the other notations can be found in Definition \ref{D:partitionnotations}.
\end{definition}
Now let us consider $\la=(k^s)$ in Proposition \ref{P:XlaAsPowersum} and $t=0$ in Theorem \ref{F:VerRecMac}.
 We have the following combinatorial formula for Macdonald function of rectangular shapes.
\begin{corollary}\label{F:RecMacAsPowerSum}
For a rectangular partition $R=(k^s)$ and a positive integer $\beta$, we have
\begin{align}\label{F:comb4Q}
Q_{R}(q,q^\beta)=\sum_{\mu\vdash ks}g_{R,\mu}(\beta,q)p_\mu.
\end{align}
\end{corollary}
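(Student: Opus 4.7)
The plan is to specialize Proposition \ref{P:XlaAsPowersum} to $\la=(k^s)$ with the group-algebra parameter $n=-s/2$, and then to identify the resulting power-sum expansion with the vertex operator realization of $Q_R(q,q^\beta)$ supplied by Theorem \ref{T:main} in the $t=0$ (pure rectangular) case.

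First I would substitute $\la_i=k$ for all $i$ and $n=-s/2$ into formula (\ref{F:CombXlambda}). The constraint $|\mu^i|=\la_1+\dotsb+\la_i-i(i+2n)\beta$ becomes $|\mu^i|=ik+i(s-i)\beta$, so in particular $|\mu^s|=sk$, and the admissible pairs of partition sequences $\underline{\mu}=(\mu^0,\dotsc,\mu^s)$, $\underline{\nu}=(\nu^1,\dotsc,\nu^s)$ are precisely the ones appearing in Definition \ref{D:gksmu} with $\mu^s=\mu\vdash sk$. This gives an explicit expansion
\begin{equation*}
X_{-k}^s.1\otimes e^{-s\eta/2}=\Bigl(\sum_{\mu\vdash sk} c_\mu\,p_\mu\Bigr)\otimes e^{s\eta/2},
\end{equation*}
where the coefficient $c_\mu$ is the double sum $\frac{(-1)^{l(\mu)}}{\tau_\mu\epsilon_\mu}\sum_{\underline\mu,\underline\nu}\prod_{i=1}^s\frac{(-1)^{l(\nu^i)}\tau_{\nu^i}}{z_{\nu^i}}\binom{m(\mu^{i-1})}{m(\mu^i\backslash\nu^i)}$ read off from (\ref{F:CombXlambda}).

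Second, I would invoke Theorem \ref{T:main} with $t=0$, which asserts $X_{-R}.1\otimes e^{-s\eta/2}=\epsilon_q(\beta,s)\,C_R\,Q_R(q,q^\beta)\otimes e^{s\eta/2}$. Setting $t=0$ in the stated formula for $C_\rho$ and using $(q;q^\beta)_0=1$ collapses it to $C_R=(q;q)_{s\beta}/(q;q)_\beta^s$. Dividing the two expressions for $X_{-k}^s.1\otimes e^{-s\eta/2}$ and picking off the coefficient of $p_\mu\otimes e^{s\eta/2}$ yields
\begin{equation*}
Q_R(q,q^\beta)=\frac{1}{\epsilon_q(\beta,s)\,C_R}\sum_{\mu\vdash sk} c_\mu\,p_\mu.
\end{equation*}

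The remaining step is purely a bookkeeping check: since $\epsilon_q(\beta,s)=(-1)^{\beta s(s-1)/2}q^{-\beta(\beta+1)s(s-1)/4}$, the reciprocal prefactor equals $(-1)^{\beta s(s-1)/2}q^{\beta(\beta+1)s(s-1)/4}(q;q)_\beta^s/(q;q)_{s\beta}$, which is exactly the scalar factor in front of the double sum in Definition \ref{D:gksmu}. Hence $c_\mu/[\epsilon_q(\beta,s)C_R]=g_{R,\mu}(\beta,q)$ and formula (\ref{F:comb4Q}) follows. The only potential pitfall is making sure the indexing conventions on the pairs $(\underline{\mu},\underline{\nu})$ in Proposition \ref{P:XlaAsPowersum} coincide with those in Definition \ref{D:gksmu} after the specializations $\la_i=k$ and $n=-s/2$; once that cross-reference is verified, no further work is needed and there is no serious mathematical obstacle.
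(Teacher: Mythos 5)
Your proposal is correct and follows exactly the paper's route: the paper derives the corollary precisely by setting $\la=(k^s)$ (with the vacuum vector $1\otimes e^{-s\eta/2}$) in Proposition \ref{P:XlaAsPowersum} and $t=0$ in Theorem \ref{T:main}, so that $C_R=(q;q)_{s\beta}/(q;q)_\beta^s$ and the prefactor $1/(\epsilon_q(\beta,s)C_R)$ is absorbed into Definition \ref{D:gksmu}. Your bookkeeping of the constraint $|\mu^i|=ik+i(s-i)\beta$ and of the scalar $\epsilon_q(\beta,s)^{-1}C_R^{-1}=g_{R,\mu}/c_\mu$ checks out, so nothing further is needed.
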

\subsection{Iterative construction with rectangular Macdonald functions}
As it was done to Jack function in \cite{CJ3} (see Remark 4.23 there), we can generalize formula (\ref{F:comb4Q}) to Macdonald functions of general shapes. Let us first introduce the definition of the complement of a partition in a rectangular partition.
\begin{definition}\cite{CJ3}
For a rectangular partition $R=(k^s)$, and a partition $\la$ satisfying $\la_1\leq k$ and $l(\la)\leq s$, we define
the complement $\overline{\la}=R-'\la$ of $\la$ in $R$ by
\begin{equation}
\overline{\la}_i=k-\la_{s+1-i} \text{ for } i=1,2,\dotsc,s.
\end{equation}
(Recall that $\mu_i=0$ for all $i> l(\mu)$.)
Specifically, we define $\mathfrak{C}(\la)=(\la_1^{l(\la)})-'\la$ and call it the exact complement of $\la$.
\end{definition}
The exact complement of $\la$ is \emph{simpler} in shape than $\la$ in terms of the number of (lower-right) corners of $\la$. To explain it explicitly, we first give the following:
\begin{definition}\cite{CJ3}
For a partition $\la=(a_1^{n_1}a_2^{n_2}\cdots a_r^{n_r})$ with $a_1>a_2\dotsb>a_r>0$ and $n_1,n_2,\dotsc,n_s>0$, we call $r$ the (lower-right) corner number of $\la$, and denote it as $n_c(\la)=r$.
\end{definition}  For this $\la$ we have $\mathfrak{C}(\la)=(a_1-a_r)^{n_r}(a_1-a_{r-1})^{n_{r-1}}\cdots(a_1-a_2)^{n_2}$, thus $n_c(\mathfrak{C}(\la))=n_c(\la)-1$ if $n_c(\la)>1$. Note that $n_c(\la)=1$ is to say $\la$ is rectangular. Let us define $n_c(\la)=0$ for $\la=(0)$. We see that the map $\mathfrak{C}$ lowers the corner number of every nonzero partition-a partition which is not $(0)$-by one.

What really makes the exact complement interesting is the following property on Macdonald functions.
This property is known in \cite{CJ3} (see Remark 4.9 there).
\begin{lemma}\label{L:ReComplement}
For a rectangular partition $R=(k^s)$, a partition $\mu$ with $\mu_1\leq k$ and $l(\mu)\leq s$ and a partition $\nu$ one has:\\
\begin{equation}\label{E:MacRc}
\langle Q_\mu Q_\nu, Q_R\rangle\neq0 \text{  if and only if } \nu=R-'\mu.
\end{equation}
\end{lemma}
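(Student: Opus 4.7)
The plan is to convert the scalar product $\langle Q_\mu Q_\nu, Q_R\rangle$ into the coefficient of a skew Macdonald function $P_{R/\mu}$, and then exploit the fact that a skew shape inside a rectangle equals, up to a 180-degree rotation, a genuine Young diagram.

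First, I would use the dual basis $\{P_\la,Q_\la\}$ with $\langle P_\la,Q_\nu\rangle=\delta_{\la\nu}$ (mentioned just after Lemma \ref{D:Qlambda}) and the adjoint operator $Q_\mu^\perp$ of multiplication by $Q_\mu$. Defining the skew Macdonald function $P_{R/\mu}:=Q_\mu^\perp P_R$, one has the expansion $P_{R/\mu}=\sum_\nu c_\nu P_\nu$ with $c_\nu=\langle P_R,Q_\mu Q_\nu\rangle$. Combining with $Q_R=\langle Q_R,Q_R\rangle\,P_R$ gives
\begin{equation*}
\langle Q_\mu Q_\nu,Q_R\rangle=\langle Q_R,Q_R\rangle\cdot c_\nu.
\end{equation*}
Hence (\ref{E:MacRc}) reduces to showing $P_{R/\mu}=c\,P_{R-'\mu}$ for some nonzero scalar $c=c(q,t,\mu,R)$.

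Second, the skew Young diagram $R/\mu$ is exactly the 180-degree rotation of the Young diagram of $R-'\mu$ inside $R$. This yields a bijection between semistandard Young tableaux of the two shapes (sending entry $i$ to $n+1-i$ and reversing rows and columns). For Schur functions this already gives $s_{R/\mu}=s_{R-'\mu}$. For Macdonald functions, the tableaux expansion $P_{\la/\mu}(x;q,t)=\sum_T\psi_T(q,t)\,x^T$ behaves compatibly with this rotation: the weights $\psi_T(q,t)$ transform by a single global scalar $c$, since the arm and leg lengths within the skew diagram also transform rigidly. This yields $P_{R/\mu}=c\,P_{R-'\mu}$, and the nonvanishing $c\neq 0$ follows by comparing the leading monomial terms (both sides contain $m_{R-'\mu}$ with a nonzero coefficient).

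The main obstacle is tracking the Macdonald weight $\psi_T(q,t)$ through the 180-degree rotation: the combinatorial bijection on tableaux is straightforward, but $\psi_T$ is a product of local factors built from arm and leg lengths, and one must verify that the product changes by a uniform global scalar, not by a $T$-dependent factor. An alternative approach, perhaps closer in spirit to the vertex operator techniques of this paper, would be to induct on $|\mu|$ using the Macdonald-Pieri rule: the case where $\mu$ is a single row follows from Pieri directly, and one builds up general $\mu$ iteratively, matching the complement operation at each Pieri step.
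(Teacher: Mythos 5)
The first thing to note is that the paper does not prove this lemma at all: it is quoted as a known fact with a pointer to Remark 4.9 of \cite{CJ3}, so there is no in-paper argument to compare against and you are attempting strictly more than the author does. Your first step is correct and clean: with $P_{R/\mu}=Q_\mu^\perp P_R=\sum_\nu c_\nu P_\nu$ one has $c_\nu=\langle P_R,Q_\mu Q_\nu\rangle$ and $Q_R=\langle Q_R,Q_R\rangle P_R$, so the lemma is exactly the statement that $P_{R/\mu}=c\,P_{R-'\mu}$ for some nonzero scalar $c$.

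The gap is where you yourself flag it, and it is a real one rather than a formality. The weight $\psi_T(q,t)$ of a tableau $T$ of shape $R/\mu$ is a product of local factors whose arm and leg lengths are measured in the \emph{ambient} partitions of the chain $\mu=\la^{(0)}\subset\la^{(1)}\subset\dotsb\subset\la^{(n)}=R$, not ``within the skew diagram''; the $180^\circ$ rotation sends this chain to a chain of complements sitting inside entirely different ambient shapes, so there is no rigid transformation of arms and legs to appeal to, and the claim that the product of weights changes by a $T$-independent scalar is precisely the content of the complementation identity you are trying to prove. (That the scalar is nontrivial is already visible for $R=(1,1)$, $\mu=(1)$, where $Q_{R/\mu}=\frac{(1-q)(1+t)}{1-qt}Q_{(1)}$.) Your Pieri fallback also needs more care than stated: the operator $Q_\mu^\perp$ is not the composition of the one-row operators $Q_{\mu_1}^\perp Q_{\mu_2}^\perp\dotsm$ (that composition is $g_\mu^\perp$), so one cannot peel off one Pieri step per row of $\mu$ directly; one would have to work with $g_\mu$ and then transfer the vanishing statement back to $Q_\mu$ through the unitriangularity of Lemma \ref{D:Qlambda}, and a sum of nonzero terms could in principle cancel. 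Either route can be completed (the target identity is a classical complementation property of Macdonald polynomials in a rectangle), but as written the central step is asserted with an unsound justification, so the proof is incomplete precisely where the difficulty lies.
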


Recall that for a symmetric function $F\in\Lambda_F$, the operator $F^*$ is defined by
$\langle F^*.u,v\rangle=\langle u,Fv\rangle$. Thus the equivalence relation (\ref{E:MacRc}) is to say that $Q_\mu^*.Q_R\doteq Q_{R-'\mu}$, where-and in the following-the sign $\doteq$ means that the two sides are different only by a nonzero scalar multiplication. Specifically,
\begin{equation}\label{E:iterativeengine}
Q_{\mathfrak{C}(\la)}^*.Q_{(\la_1^{l(\la)})}\doteq Q_{\la},
\end{equation}
as $(\la_1^{l(\la)})-'\mathfrak{C}(\la)=\la$.
This property  provides us an iterative method to construct Macdonald functions with those of rectangular shapes. 
For example, let us show that
\begin{equation}\label{E:example}
Q_{(6,6,3,2,2)}\doteq (Q_{(1^1)}^*.Q_{(4^3)})^*.Q_{(6^5)}.
\end{equation}
First, we set $\la=(6,6,3,2,2)$; then $(\la_1^{l(\la)})$ equals $(6^5)$. Therefore we have
\begin{equation} \label{E:exampleQla1}
Q_\la \doteq Q_{\mathfrak{C}(\la)}^*.Q_{(6^5)}
\end{equation}
 by (\ref{E:iterativeengine}). We find that $\mathfrak{C}(\la)$ equals $(4,4,3)$ by definition.  Write $\mu=\mathfrak{C}(\la)$; then $(\mu_1^{l(\mu)})$ equals $(4^3)$. Again we have
  \begin{equation}\label{E:exampleQla2}
  Q_{\mathfrak{C}(\la)}=Q_\mu \doteq Q_{\mathfrak{C}(\mu)}^*.Q_{(4^3)}
  \end{equation}
   But $\mathfrak{C}(\mu)$ equals $(1^1)$. Replacing (\ref{E:exampleQla2}) into (\ref{E:exampleQla1}), we get (\ref{E:example}).

Based on this example, we define a sequence of rectangles associated to a partition as the following.
\begin{definition}
For a nonzero partition $\la$, we define the rectangular filtration of $\la$ $R(\la)=(R_1,R_2,\dotsc,R_r)$ iteratively by the following:\\
(1) If $\la$ is of rectangular shape, we set $R(\la)=(R_1)=(\la)$;\\
(2) Otherwise, define $R(\la)=((\la_1^{l(\la)}), R_2,R_3,\dotsc,R_r)$,
where $(R_2,R_3,\dotsc,R_r)$ is the rectangular filtration of $\mathfrak{C}(\la)$.
\end{definition}
We see that the $r$ in the definition is equal to the corner number of $\la$.
Now we can state the main result of this section.
\begin{theorem}\label{T:MacFiltration}
Let $R(\la)=(R_1,R_2,\dotsc,R_r)$ be the rectangular filtration of $\la$. Denoting $f_i=Q_{R_i}(q,q^\beta)$, we have
\begin{align*}\label{F:MacFiltration}
&Q_\la(q,q^\beta)\\
&=c_\la(q,q^\beta)((\cdots((f_r^*.f_{r-1})^*.f_{r-2})^*.\cdots).f_2)^*.f_1\\
                &=c_\la(q,q^\beta)\sum_{\underline{\mu}}\prod_{i=1}^rg_{R_i,\mu^i}(\beta,q)
                ((\cdots((p_{\mu^r}^*.p_{{\mu^{r-1}}})^*.p_{{\mu^{r-2}}})^*.\cdots).p_{\mu^2})^*.p_{\mu^1},
\end{align*}
where $c_\la(q,q^\beta)$ is a non-vanishing rational function of $q,q^\beta$, $g_{R_i,\mu^i}(\beta,q)$ is given by Definition \ref{D:gksmu} and the sum is over sequences of partitions $\underline{\mu}=(\mu^1,\mu^2,\dotsc,\mu^r)$ such that $|\mu^i|=|R_i|$.
\end{theorem}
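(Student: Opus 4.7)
The plan is to establish the two asserted equalities separately. The first is an operator identity expressing $Q_\la$ through iterated adjoint actions on rectangular Macdonald functions; the second then follows by plugging in the power-sum expansion of each $Q_{R_i}(q,q^\beta)$ given by Corollary \ref{F:RecMacAsPowerSum}.

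I would prove the first (operator) identity by induction on the corner number $r = n_c(\la)$, which coincides with the length of the rectangular filtration $R(\la)$. When $r=1$, the partition $\la$ is rectangular with $R_1=\la$, so $f_1 = Q_\la(q,q^\beta)$ and the formula holds with $c_\la = 1$. For $r \geq 2$, set $\mu = \mathfrak{C}(\la)$; by the very definition of the rectangular filtration, $R(\mu) = (R_2,\dotsc,R_r)$ and $n_c(\mu) = r-1$. Relation (\ref{E:iterativeengine}), which rests on Lemma \ref{L:ReComplement}, gives $Q_\la \doteq Q_\mu^* . Q_{(\la_1^{l(\la)})} = Q_\mu^* . f_1$, i.e.\ equality up to a nonzero scalar. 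By the inductive hypothesis applied to $\mu$,
\begin{align*}
Q_\mu = c_\mu(q,q^\beta)\, \bigl((\cdots((f_r^*.f_{r-1})^*.f_{r-2})^*\cdots).f_3\bigr)^*.f_2
\end{align*}
for some nonzero rational function $c_\mu$. Since the adjoint satisfies $(cA)^* = cA^*$ for a scalar $c$, taking the adjoint and applying to $f_1$ yields the desired nested expression multiplied by $c_\mu$; combining with the scalar from $\doteq$ produces the nonzero constant $c_\la(q,q^\beta)$.

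For the second (combinatorial) equality, I would substitute the expansion $f_i = Q_{R_i}(q,q^\beta) = \sum_{\mu^i \vdash |R_i|} g_{R_i,\mu^i}(\beta,q)\,p_{\mu^i}$ from Corollary \ref{F:RecMacAsPowerSum} into the operator identity just established. The map $F \mapsto F^*$ is linear in $F$, and the operator composition $F . G$ is linear in both arguments, so the nested expression distributes over the sums defining each $f_i$. What emerges is exactly the stated sum over sequences $\underline{\mu} = (\mu^1,\dotsc,\mu^r)$ with $|\mu^i| = |R_i|$, with the coefficients $g_{R_i,\mu^i}(\beta,q)$ factoring out as asserted.

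The one point requiring care is verifying that $c_\la(q,q^\beta)$ is genuinely non-vanishing as a rational function. This is automatic: every $\doteq$ invoked (via (\ref{E:iterativeengine}) and the non-orthogonality clause of Lemma \ref{L:ReComplement}) represents equality up to a nonzero scalar, so the product of these $r-1$ scalars accumulated along the induction remains nonzero. Beyond this bookkeeping there is no real obstacle; the theorem is a structural consequence of the rectangular complement property for Macdonald functions combined with the power-sum formula for rectangular Macdonald functions established earlier in this section.
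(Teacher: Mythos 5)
Your proposal is correct and follows essentially the same route as the paper: induction on the corner number using the exact-complement relation (\ref{E:iterativeengine}) (which rests on Lemma \ref{L:ReComplement}) for the first equality, and substitution of Corollary \ref{F:RecMacAsPowerSum} together with linearity for the second. The extra bookkeeping you supply (linearity of $F\mapsto F^*$, accumulation of nonzero scalars) is exactly what the paper leaves implicit.
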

\begin{proof}
 For the first equal sign, let us make induction on the corner number of $\la$. If $n_c(\la)=1$, $\la$ is already a rectangular partition and there is nothing to do. Otherwise, $Q_\la\doteq Q_{\mathfrak{C}(\la)}^*.f_1$ by (\ref{E:iterativeengine}), but $n_c(\mathfrak{C}(\la))=n_c(\la)-1$, and the induction comes in to construct $Q_{\mathfrak{C}(\la)}$. This proves the first equality.

The second equal sign follows directly from the first one by Corollary \ref{F:RecMacAsPowerSum}.
\end{proof}
We remark that for $\la=(a_1^{n_1}a_2^{n_2}\cdots a_r^{n_r})$, it is not difficult to give a direct description of its rectangular filtration
$R(\la)=(R_1,R_2,\dotsc,R_r)$:\\ $R_i=(\dot{a}_i^{n_i^*})$ where $n^*_{2i+1}=\sum_{i<j< r-i+1} n_j$,
 $n^*_{2i}=\sum_{i<j\leq r-i+1} n_j$ and $\dot{a}_{2i}=a_i-a_{r-i}$, $\dot{a}_{2i+1}=a_{i+1}-a_{r+1-i}$ ($a_{r+1}=0$).

\begin{remark}\label{R:MacAscoefficient}
We also remark that we can use this method to express Macdonald functions as the coefficients of some monomials of certain Laurent polynomial.
Let us give an example to show how to do this.

We consider partition $\la=(8,8,2,2,2)$, notice that $Q_\la\doteq Q_{(6^3)}^*.Q_{(8^5)}$.
Up to scalar, $Q_{(k^s)}(q,q^\beta)$ is the coefficient of $z_1^kz_2^k\dotsm z_s^k$ in the following
\begin{equation}
P(z_1,\dotsc,z_s)=\prod_{1\leq i<j\leq s}\Big(\frac{z_i}{z_j};q\Big)_\beta
\Big(\frac{qz_j}{z_i};q\Big)_\beta\prod_{a=1}^s\exp\Big(\sum_{n\geq1}\frac{h_{-n}z^a}{n}\frac{1-q^{n\beta}}{1-q^n}\Big).1.
\end{equation}
Thus we see that, up to a scalar multiplication, $Q_\la(q,q^\beta)$ is the coefficient of $w_1^6w_2^6w_3^6z_1^8z_2^8\dots z_5^8$ in the following:
\begin{align*}
&P(w_1,w_2,w_3)^*.P(z_1,z_2,\dotsc,z_5)\\\nonumber
&=\prod_{1\leq i<j\leq 3}\Big(\frac{w_i}{w_j};q\Big)_\beta
\Big(\frac{qw_j}{w_i};q\Big)_\beta
\prod_{1\leq i<j\leq 5}\Big(\frac{z_i}{z_j};q\Big)_\beta
\Big(\frac{qz_j}{z_i};q\Big)_\beta\\\nonumber
&\qquad\times\prod_{a=1}^3\exp\Big(\sum_{n\geq1}\frac{h_nw_a^n}{n}\frac{1-q^{n\beta}}{1-q^n}\Big)
\prod_{b=1}^5\exp\Big(\sum_{n\geq1}\frac{h_{-n}z_a^n}{n}\frac{1-q^{n\beta}}{1-q^n}\Big).1.
\end{align*}
Using the technique as in the proof of Lemma \ref{L:Xlambda},
one can remove the annihilation operators (the exponents with $h_n$'s) to the right of the creating operators (the exponents with $h_{-n}$'s).
We find that up to a scalar multiplication, $Q_{(8,8,2,2,2)}(q,q^\beta)$ is the coefficient of $w_1^6w_2^6w_3^6z_1^8z_2^8\dots z_5^8$ in
\begin{align*}
&=\prod_{1\leq i<j\leq 3}\Big(\frac{w_i}{w_j};q\Big)_\beta
\Big(\frac{qw_j}{w_i};q\Big)_\beta
\prod_{1\leq i<j\leq 5}\Big(\frac{z_i}{z_j};q\Big)_\beta
\Big(\frac{qz_j}{z_i};q\Big)_\beta\\\nonumber
&\qquad\times\prod_{a=1}^3\prod_{b=1}^5(w_az_b;q)^{-1}_\beta
\prod_{b=1}^5\sum_{n\geq0}Q_n(q,q^\beta)z_a^n.
\end{align*}
One can formalize this example to general Macdonald functions.
Then the result can be looked as a generalized form of raising operator formula,
as the raising operator formula is essentially to express a symmetric function as the coefficient of some monomial in a Laurent polynomial.
\end{remark}
\centerline{\bf Acknowledgments}
The author thanks
Professors Naihuan Jing for his help on the work. 

 \vskip 0.1in

\bibliographystyle{amsalpha}

\end{document}